\newtheorem{theorem}{Theorem}[section]
\newtheorem{proposition}[theorem]{Proposition}
\newtheorem{corollary}[theorem]{Corollary}
\newtheorem{lemma}[theorem]{Lemma}
  \newtheorem{example}[theorem]{Example}
\newenvironment{proof}{\noindent\textit{Proof.}}
{\QED\vskip\theorempostskipamount} 
\newenvironment{proofof}[1]{\noindent\textit{Proof \protect{#1}.}}
{\QED\vskip\theorempostskipamount}
\def\petitcarre{\vrule height4pt width 4pt depth0pt}
\def\QED{\relax\ifmmode\eqno{\hbox{\petitcarre}}\else{%
  \unskip\nobreak\hfil\penalty50\hskip2em\hbox{}\nobreak\hfil
  \petitcarre
  \parfillskip=0pt \finalhyphendemerits=0\par\smallskip}
  \fi}
\DeclareMathOperator{\Card}{Card}
\DeclareMathOperator{\Hom}{Hom}
\DeclareMathOperator{\Aut}{Aut}
\DeclareMathOperator{\Pal}{Pal}
\def\u(#1){\underline{#1\!}\,}
\def\1{\mathbf{1}}
\newcommand{\F}{FG}
\newcommand{\edge}[1]{\stackrel{#1}{\rightarrow}}
\newcommand{\A}{\mathcal{A}}
\newcommand{\Suf}{\mathcal S}
\numberwithin{equation}{section}
\title{The palindromization map}
\author{Dominique Perrin and Christophe Reutenauer}
\begin{document}

\maketitle
\tableofcontents

\begin{abstract}
  The palindromization map has been defined initially
  by Aldo de Luca in the context of Sturmian words.  It was extended to
  the free group of rank $2$ by Kassel and the second author.
  We extend  their construction to arbitrary alphabets. We also
  investigate the suffix automaton and compact suffix automaton
  of the words obtained by palindromization.
\end{abstract}

\section{Introduction}

The iterated palindromic closure is an injective map mapping
arbitrary words to palindromes. It has been introduced by Aldo
de Luca in~\cite{deLuca1997}.
This map is used to define a representation of Sturmian words
by means of a directive word and is related to a transformation
introduced by Rauzy (see~\cite{Rauzy1984,ArnouxRauzy1991}).
The iterated palindromic closure has been shown in~\cite{KasselReutenauer2008}
to be extendable, in the case of two letters,
to a map (not anymore injective) from the free group into itself
and to have many interesting properties.

In this article, we  study the extension of the iterated palindrome closure
to a
map from the free group on more than two letters to itself. We show that
some of the features appearing with two letters remain valid while
some others do not hold anymore. In particular, we show that the
map is continuous for the profinite topology (Proposition~\ref{propositionProfinite}). We were not able to characterise the kernel of the
map as it is done in~\cite{KasselReutenauer2008}, where it  is
related to the braid group. We also discuss the relation with
noncommutative cohomology evidenced in~\cite{KasselReutenauer2008}
but we show that, on more than two letters, the cocycle
corresponding to the iterated palindromization map is not trivial.

In Section~\ref{sectionSuffixAutomaton}, we
describe the suffix automaton of a word of the form
$\Pal(w)$, that is the minimal automaton of the set of suffixes of this word. We extend to arbitrary alphabets results concerning the suffix automaton; the corresponding results for binary alphabets are from \cite{EpifanioMignosiShallitVenturini2007}.

In Section~\ref{sectionCompactAutomata}, we develop
study compact automata.
These automata have already been studied in the case
of suffix automata (see the chapter
by Maxime Crochemore 
in \cite{Lothaire2005}), but they do not seem to have been
considered before in the general case of automata. We fill this gap and present
a direct definition of a minimal compact automaton,
which is shown to be unique (Corollary~\ref{corollaryMinimalCompact}), together with two other results related to the notion of reduction of automaton (Propositions \ref{reduction} and \ref{elementary}).

This will apply in Section \ref{direct} to the construction of the
minimal compact suffix automaton of $\Pal(u)$.
In that section, we construct directly the minimal compact automaton of the set of suffixes of $\Pal(w)$. The construction 
extends the known construction in the binary alphabet case due to Epifanio, Mignosi, Shallit and Venturini \cite{EpifanioMignosiShallitVenturini2007} (see also \cite{BugeaudReutenauer2022}). It consists in computing
the automaton for $\Pal(ua)$ from the automaton of $\Pal(u)$, by adjoining one state, and several transitions from the first automaton to this state. From this, we deduce the exact form of the automaton (Theorem \ref{Construction}) and several of its properties (Corollaries \ref{only} and \ref{count})

\paragraph{Acknowledgements} We thank Maxime Crochemore for his advice
about compact automata.

\section{The palindromization map}
We denote by $A^*$ the free monoid on the alphabet $A$ and by $1$
 the empty word. We denote by $\tilde{w}=a_n\cdots a_2a_1$
the reversal of the word $w=a_1a_2\cdots a_n$ with $a_i\in A$. The
word $w$ is a  \emph{palindrome} if $\tilde{w}=w$.

We denote by $\F(A)$ the free group on $A$. If $w$ is in $F(A)$ and $a$ in $A$, we 
denote by $|w|_a$ the number of occurrences of $a$ in $w$, where one counts with -1 the occurrences of $a^{-1}$; this is well defined and does not depend on the chosen expression for $w$; we call it the {\it $a$-degree} of $w$. Moreover, define $|w|=\sum_{a\in A}|w|_a$, the {\it algebraic length} of $w$. In particular, if $w\in A^*$, then $|w|$ is the 
{\it length} of $w$.

The {\it reversal} of the 
element
$w=a_1a_2\cdots a_n$ with $a_i\in A\cup A^{-1}$
is the element $\tilde{w}=a_n\cdots a_2a_1$. This does not depend on the chosen expression for $w$.
The map $w\mapsto \tilde{w}$ is an antimorphism, that
is, it satisfies $\widetilde{uv}=\tilde{v}\tilde{u}$.
We also say that an element  $w$ of $\F(A)$ is a palindrome if
$\tilde{w}=w$.

Let us define two morphisms $L\colon u\mapsto L_u$ and $R\colon u\mapsto R_u$
from $\F(A)$ into its group
of automorphisms as follows. For $a,b\in A$, we set
\begin{displaymath}
L_a(b)=\begin{cases}a&\mbox{if $b=a$}\\ab&\mbox{otherwise}\end{cases}
\end{displaymath}
The map $L_a$ is an automorphism of $\F(A)$ since
its inverse is the map 

\begin{displaymath}
L_a^{-1}(b)=\begin{cases}a&\mbox{if $b=a$}\\a^{-1}b&\mbox{otherwise}\end{cases}
\end{displaymath}

Symmetrically, for $a,b\in A$, we set $R_a(b)=\widetilde{L_a(b)}$.

Thus, for example, $L_a(ab^{-1})=aL_a(b)^{-1}=ab^{-1}a^{-1}$
and $R_a(ab^{-1})=aR_a(b)^{-1}=aa^{-1}b^{-1}=b^{-1}$.

Note that $L,R$ are related by two identities. The first one is
\begin{equation}
aR_a(u)=L_a(u)a\label{eqLR1}
\end{equation}
for every $a\in A\cup A^{-1}$ and $u\in\F(A)$. Indeed, this is true
when $u$ is a letter; by rewriting equivalently $R_a(u)=a^{-1}L_a(u)a$, this equality follows since both sides are the image of $u$ under an automorphism of $\F(A)$.

The second one is
\begin{equation}
R_u(v)=\widetilde{L_u(\tilde{v})}\label{eqLR2}
\end{equation}
for every $u,v\in\F(A)$. Indeed, this is true when $u$
is a letter and the general case follows similarly.

Every word $w\in A^*$ is a prefix of some palindrome since
$w\tilde{w}$ is always a palindrome. Thus, there
exists a palindrome of shortest length which has $w$ as a prefix.
Actually, this palindrome is unique.
It is called the \emph{palindromic closure} of $w$
and  denoted $w^{(+)}$. One has
$w^{(+})=yz\tilde{y}$ where $z$ is the longest palindrome
suffix of $w=yz$ (for these results, due to Aldo de Luca, see for example \cite{Reutenauer2019}  Proposition 12.1.1).

For example, $(abaa)^{(+)}=abaaba$, since the longest palindromic suffix of $abaa$ is $z=aa$, and $y=ab$.

Let $\Pal$ be the unique map from $A^*$ to $A^*$ such that $\Pal(1)=1$, and for $w\in A^*$ and $a\in A$
\begin{displaymath}
\Pal(wa)=(\Pal(w)a)^{(+)}.
\end{displaymath}
For a word $w$, $\Pal(w)$ is called
the \emph{iterated palindromic closure} of $w$. 
The iterated palindromic closure has been introduced 
by Aldo de Luca~\cite{deLuca1997}
who has shown that it is injective (see for example \cite{Reutenauer2019} p. 102, where the injectivity is proved by an algorithm).

For example $\Pal(aba)=abaaba$, since $\Pal(a)=a$, $\Pal(ab)=(ab)^{(+)}=aba$, and finally $\Pal(aba)=(abaa)^{(+)}=abaaba$.

The mapping $\Pal$ may be extended to infinite words, since when $u$ is a proper prefix of $v$, $\Pal(u)$ is a proper prefix of $\Pal(v)$.

An important property of $\Pal$ is the following functional equation, known
as \emph{Justin's Formula} \cite{Justin2005}. For all $u,v\in A^*$,
\begin{equation}
\Pal(uv)=\Pal(u)R_u(\Pal(v)).\label{equationJustinR}
\end{equation}
A dual form of \eqref{equationJustinR} (which is the one actually given
in \cite[Lemma 2.1]{Justin2005}) is
\begin{equation}
\Pal(uv)=L_u(\Pal(v))\Pal(u).\label{equationJustinL}
\end{equation}
Indeed, assuming \eqref{equationJustinL}, we have using \eqref{eqLR2},
\begin{eqnarray}
\Pal(uv)&=&\widetilde{\Pal(uv)}=\widetilde{\Pal(u)}\widetilde{L_u(\Pal(v))}\\
&=&\Pal(u)R_u(\widetilde{\Pal(v)})=\Pal(u)R_u(\Pal(v)).\label{eqEquiv}
\end{eqnarray}
Hence \eqref{equationJustinL} implies \eqref{equationJustinR}. The reverse implication is true, too, as is similarly verified.

We want to prove the following result, extending the construction of~\cite{KasselReutenauer2008}
to arbitrary alphabets.
\begin{theorem}
There exists a unique extension of $\Pal$ to a map
from $\F(A)$ to itself fixing every $a\in A\cup A^{-1}$
and satisfying~\eqref{equationJustinR}
for every $u,v\in\F(A)$.
\end{theorem}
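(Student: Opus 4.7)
The plan is to handle uniqueness and existence separately, with the morphism property of $R\colon\F(A)\to\Aut(\F(A))$ doing all the real work.

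For uniqueness, I would apply \eqref{equationJustinR} with $v=a\in A\cup A^{-1}$ a single letter to get $\Pal(ua)=\Pal(u)R_u(a)$ for every $u\in\F(A)$. Writing any $w\in\F(A)$ as a product of letters and their inverses, iterating this identity pins down $\Pal(w)$ entirely in terms of the already-prescribed values $\Pal(a)=a$ and the automorphisms $R_{a_1\cdots a_k}$. Hence at most one map can satisfy the two conditions.

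For existence, I would take the same formula as a definition: set $\Pal(1)=1$ and $\Pal(wa)=\Pal(w)R_w(a)$ for $w\in(A\cup A^{-1})^*$ and $a\in A\cup A^{-1}$, where $R_w$ is computed via the natural image of $w$ in $\F(A)$. One must then check three things. First, on $A^*$ this agrees with the original $\Pal$: this is just Justin's formula \eqref{equationJustinR} specialized to the one-letter case, already established on the free monoid. Second, and this is the key step, $\Pal$ descends to a map on $\F(A)$; equivalently, $\Pal(w_1 a a^{-1} w_2)=\Pal(w_1 w_2)$ for all $w_1,w_2\in(A\cup A^{-1})^*$. I would argue by induction on $|w_2|$: when $w_2=1$, two applications of the recursion give $\Pal(w_1 a a^{-1})=\Pal(w_1)R_{w_1}(a)R_{w_1 a}(a^{-1})=\Pal(w_1)R_{w_1}(aa^{-1})=\Pal(w_1)$, where the middle step uses the homomorphism identity $R_{w_1 a}=R_{w_1}\circ R_a$; for $w_2=w_2'c$ one invokes the induction hypothesis together with the equality $R_{w_1 a a^{-1} w_2'}=R_{w_1 w_2'}$ in $\F(A)$.

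Finally, to check that the extended $\Pal$ actually satisfies \eqref{equationJustinR} for arbitrary $u,v\in\F(A)$, I would induct on the length of a reduced expression for $v$. The base $v=1$ is immediate. For $v=v'c$ with $c\in A\cup A^{-1}$, the recursion gives $\Pal(uv)=\Pal(uv')R_{uv'}(c)$, and combining the inductive hypothesis with the morphism identity $R_{uv'}=R_u\circ R_{v'}$ rewrites this as $\Pal(u)R_u(\Pal(v')R_{v'}(c))=\Pal(u)R_u(\Pal(v))$.

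The main obstacle is the descent to $\F(A)$: this is the only place where something could plausibly go wrong, and it goes through precisely because $R$ is a group homomorphism into $\Aut(\F(A))$, so that $R_{aa^{-1}}=R_1=\id$. Everything else is a clean induction built on this fact.
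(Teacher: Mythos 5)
Your proposal is correct, but it is organized differently from the paper's argument, so a comparison is worthwhile. The paper proves a more general statement: for any morphism $\alpha\colon u\mapsto\alpha_u$ from $\F(A)$ to $\Aut(\F(A))$ with $\alpha_a(a)=a$, there is a unique map fixing every letter and satisfying the cocycle identity. There, uniqueness comes from peeling letters off on the \emph{left} ($f(au)=a\alpha_a(f(u))$ for $au$ reduced), and existence is obtained by taking that recursion on \emph{reduced} words as the definition, so no well-definedness question arises, at the price of a case split in the induction (whether $av$ is reduced or not). You instead peel letters on the right, define the map on all of $(A\cup A^{-1})^*$, and prove descent to $\F(A)$ by checking invariance under insertion of $aa^{-1}$; this buys you a case-free induction for \eqref{equationJustinR} on the reduced length of $v$, and you also make explicit the agreement with the original $\Pal$ on $A^*$, which the paper leaves implicit. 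One small point of precision: your descent computation needs $R_a(a^{-1})=a^{-1}$ (equivalently $R_a(a)=a$) to pass from $R_{w_1}(a)R_{w_1a}(a^{-1})$ to $R_{w_1}(aa^{-1})$, and your closing remark credits only the homomorphism identity $R_{aa^{-1}}=\id$, which by itself would not suffice: for a general morphism $\alpha$ without $\alpha_a(a)=a$, the right recursion does not factor through the free group. Since you use only the morphism property of $R$ and $R_a(a)=a$, your argument in fact proves the paper's more general proposition as well.
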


We
will still denote by $\Pal$
the extension of the iterated palindromic closure
to the free group and call it the \emph{palindromization map}.
We will see below that the extension also satisfies \eqref{equationJustinL}, for any $u,v\in \F(A)$.

The statement follows directly from the following property.
\begin{proposition}
Let $\alpha:u\in \F(A)\mapsto \alpha_u\in\Aut(\F(A))$ be a morphism from the free group on $A$
to the group of its automorphisms, such that $\alpha_a(a)=a$
for every $a\in A$. There exists a unique map
$f:\F(A)\to\F(A)$ such that
\begin{itemize}
\item[\rm (i)] $f(a)=a$ for every $a\in A\cup A^{-1}$.
\item[\rm (ii)] 
\begin{equation}f(uv)=f(u)\alpha_u(f(v))\label{eqSequential}
\end{equation} for every $u,v\in\F(A)$.
\end{itemize}

\end{proposition}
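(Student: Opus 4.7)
The plan is to establish uniqueness first and then existence via an explicit formula on the free monoid $(A\cup A^{-1})^*$ that is shown to descend to $\F(A)$.

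For uniqueness, observe that any $w\in\F(A)$ has a reduced expression $w=a_1a_2\cdots a_n$ with $a_i\in A\cup A^{-1}$. Iterating (ii) and using (i) forces
\[
f(w)=a_1\,\alpha_{a_1}(a_2)\,\alpha_{a_1a_2}(a_3)\cdots\alpha_{a_1\cdots a_{n-1}}(a_n),
\]
so that $f$ is determined on every element.

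For existence, I would first note that, since $\alpha$ is a morphism and $\alpha_a(a)=a$ for $a\in A$, the inverse automorphism $\alpha_{a^{-1}}=\alpha_a^{-1}$ also fixes $a$, and hence fixes $a^{-1}$. Thus $\alpha_b(b)=b$ for every $b\in A\cup A^{-1}$. I then use the displayed formula above to define a map $\tilde f:(A\cup A^{-1})^*\to\F(A)$ on the free monoid (with $\tilde f(1)=1$); because $\alpha$ is a morphism, a direct expansion shows that the cocycle identity $\tilde f(uv)=\tilde f(u)\alpha_u(\tilde f(v))$ already holds on $(A\cup A^{-1})^*$.

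The main step, and the only real obstacle, is to show that $\tilde f$ is invariant under free reduction, so that it descends to a well-defined map $f:\F(A)\to\F(A)$. By the cocycle identity just established, this reduces to showing $\tilde f(aa^{-1}v)=\tilde f(v)$, together with the symmetric $\tilde f(a^{-1}av)=\tilde f(v)$, for every $a\in A\cup A^{-1}$ and every $v\in(A\cup A^{-1})^*$. The key computation is
\[
\tilde f(aa^{-1}v)=a\cdot\alpha_a\bigl(a^{-1}\alpha_{a^{-1}}(\tilde f(v))\bigr)=a\cdot\alpha_a(a^{-1})\cdot\alpha_{aa^{-1}}(\tilde f(v))=a\cdot a^{-1}\cdot\tilde f(v)=\tilde f(v),
\]
using the extended fixed-letter property $\alpha_b(b)=b$ for $b\in A\cup A^{-1}$ and the fact that $\alpha$ is a morphism. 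Once $f:\F(A)\to\F(A)$ is thereby defined, property (i) is immediate from its definition on letters, and property (ii) on $\F(A)$ is inherited from its already-proved analogue on $(A\cup A^{-1})^*$.
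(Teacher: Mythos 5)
Your proof is correct, but it is organized differently from the paper's. The paper never leaves the set of reduced words: it defines $f$ by the recursion $f(au)=a\alpha_a(f(u))$ on reduced words $au$ (which also gives uniqueness), and then proves the identity $f(uv)=f(u)\alpha_u(f(v))$ by induction on the length of the reduced word $u$, with a case split at the junction according to whether $av$ is reduced or cancellation occurs. You instead define the map by the closed formula on the whole free monoid $(A\cup A^{-1})^*$, where the cocycle identity holds identically by expansion, and you concentrate all the work in a well-definedness step: invariance of $\tilde f$ under insertion or deletion of a factor $aa^{-1}$, which lets the map descend to $\F(A)$. The two arguments hinge on the same algebraic kernel, namely that $\alpha_b(b)=b$ and hence $\alpha_b(b^{-1})=b^{-1}$ for every $b\in A\cup A^{-1}$, so that $a\,\alpha_a\bigl(a^{-1}\alpha_{a^{-1}}(x)\bigr)=x$; in the paper this computation appears inside the non-reduced case of the induction, in your version it is the elementary-reduction invariance. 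What your route buys is that the functional equation comes for free on the free monoid and no case analysis on cancellation in $uv$ is needed; what it costs is the (standard, and correctly handled via the monoid cocycle identity and the fact that $\alpha_w$ depends only on the image of $w$ in $\F(A)$) quotient argument. One cosmetic point: for completeness you should note $f(1)=1$, which follows from (ii) with $u=v=1$ (the paper makes this explicit), though your definition $\tilde f(1)=1$ already takes care of it on the existence side.
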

\begin{proof}
We prove first uniqueness of $f$. Equation~\eqref{eqSequential} with $u=v=1$ implies
that $f(1)=1$.

Next, the same equation implies
$$
f(au)=a\alpha_a(f(u)).
$$
for each $u\in\F(A)$ and each $a\in A\cup A^{-1}$. Thus 
there is a unique  $f$ such that
for each reduced word $au$ with $a\in A\cup A^{-1}$
one has $f(au)=a\alpha_a(f(u))$. Indeed, this
is true when $u=1$ and follows easily
using induction on the length of $au$.
Thus there is at most one map $f$ satisfying conditions
(i) and (ii).

To prove the existence, let us
prove that, for this map $f$, Equation~\eqref{eqSequential}
holds for every $v\in\F(A)$ by induction on the length $l(u)$ of
the reduced word $u$. It holds
for $|u|=0$. Next, let $u\in\F(A)$ be of positive length.
Set $u=u'a$ with $a\in A\cup A^{-1}$, in reduced form. Then
$l(u')=l(u)-1$. Since $uv=u'(av)$, the induction hypothesis implies
\begin{eqnarray*}
f(uv)&=&f(u')\alpha_{u'}(f(av)).
\end{eqnarray*}

Applying again the induction hypothesis, we obtain
$$f(u)=f(u'a)=f(u')\alpha_{u'}(a).$$

- Assume first that $av$ is reduced. Then, by definition of $f$, we have
$f(av)=a\alpha_a(f(v))$. Thus
\begin{eqnarray*}
f(uv)&=&f(u')\alpha_{u'}(a\alpha_a(f(v)))\\
&=&f(u')\alpha_{u'}(a)\alpha_{u}(f(v)).
\end{eqnarray*}

Thus we obtain
\begin{eqnarray*}
f(uv)
&=&f(u)\alpha_u(f(v)).
\end{eqnarray*}

- Assume now that $av$ is not reduced; set $v=a^{-1}v'$ in reduced form. Then, by definition of $f$, we have $f(v)=f(a^{-1}
v')=a^{-1}\alpha_{a^{-1}}(f(v'))$.

Thus, since
$\alpha_a(a^{-1})=a^{-1}$, we have
\begin{eqnarray*}
f(u)\alpha_u(f(v))&=&f(u')\alpha_{u'}(a)\alpha_{u'a}(a^{-1}\alpha_{a^{-1}}(f(v'))\\
&=&f(u')\alpha_{u'}(a)\alpha_{u'}(a^{-1})\alpha_{u'}(f(v'))\\
&=&f(u')\alpha_{u'}(f(v'))
\end{eqnarray*}
which by induction hypothesis is equal to $f(u'v')=f(uv)$.
\end{proof}

We now verify the following property of the palindromization map.

\begin{proposition}\label{propositionPalisPal}
For every $w\in\F(A)$, $\Pal(w)$ is a palindrome.
\end{proposition}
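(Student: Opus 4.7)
The plan is to induct on the length of the reduced word representing $w \in \F(A)$. The base case covers $w=1$ (giving $\Pal(1)=1$) and, by condition (i) of the defining proposition, the single-letter case $w = a$ for $a \in A \cup A^{-1}$, where $\Pal(a)=a$ is trivially a palindrome.

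For the inductive step, I would write $w = au$ with $a \in A \cup A^{-1}$ in reduced form, so that the reduced length of $u$ is strictly less than that of $w$. Applying Justin's formula \eqref{equationJustinR} gives
\begin{equation*}
\Pal(au) \;=\; \Pal(a)\,R_a(\Pal(u)) \;=\; a\,R_a(\Pal(u)),
\end{equation*}
and then identity \eqref{eqLR1} lets me rewrite this as $\Pal(au) = L_a(\Pal(u))\,a$. The goal is now to show that this equals its own reversal.

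Taking reversals and using the fact that $w \mapsto \tilde{w}$ is an antimorphism, I get
\begin{equation*}
\widetilde{\Pal(au)} \;=\; a\cdot\widetilde{L_a(\Pal(u))}.
\end{equation*}
Identity \eqref{eqLR2}, applied with the letter $a$ in place of $u$, tells me that $\widetilde{L_a(x)} = R_a(\tilde{x})$ for any $x \in \F(A)$; taking $x = \Pal(u)$ and invoking the induction hypothesis $\widetilde{\Pal(u)} = \Pal(u)$, I obtain $\widetilde{L_a(\Pal(u))} = R_a(\Pal(u))$. Combining, $\widetilde{\Pal(au)} = a\,R_a(\Pal(u)) = \Pal(au)$, closing the induction.

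I do not expect a real obstacle here: the two identities \eqref{eqLR1} and \eqref{eqLR2} together with Justin's formula make everything essentially formal. The only point requiring care is ensuring the induction is on the reduced length of $w$ as a group element (so that the decomposition $w = au$ is reduced and $u$ is strictly shorter), but this is automatic once one picks the first letter of the reduced expression.
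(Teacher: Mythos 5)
Your proof is correct and follows essentially the same route as the paper: induction on the reduced length with the decomposition $w=au$, Justin's formula $\Pal(au)=aR_a(\Pal(u))$, and the identities \eqref{eqLR1} and \eqref{eqLR2}; the paper merely packages your reversal computation as the standalone identity $aR_a(\tilde{u})=\widetilde{R_a(u)}\,a$ before running the same induction.
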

\begin{proof}
For every $u\in\F(A)$ and $a\in A\cup A^{-1}$, we have
\begin{equation}
aR_a(\tilde{u})=\widetilde{R_a(u)}a.\label{eqaR_a}
\end{equation}
Indeed, $aR_a(\tilde{u})=L_a(\tilde{u})a=\widetilde{R_a(u)}a$ by \eqref{eqLR1} and \eqref{eqLR2}. It follows from \eqref{eqaR_a} that for every $a\in A\cup A^{-1}$
and every palindrome $u\in\F(A)$, $aR_a(u)$ is a palindrome.

Let us now show by induction on the length of the reduced word
representing $w\in\F(A)$ that $\Pal(w)$ is a palindrome.
It is true if $w=1$. Next, set $w=au$ in reduced form
with $a\in A\cup A^{-1}$
and $u\in \F(A)$. We have by \eqref{equationJustinR},
$\Pal(w)=aR_a(\Pal(u))$. By induction hypothesis, $\Pal(u)$
is a palindrome and it follows from the previous observation
that $\Pal(w)$ is a palindrome, too.
\end{proof}
It follows from Proposition \ref{propositionPalisPal},
using the same argument as in \eqref{eqEquiv},
that the map $\Pal$ satisfies also \eqref{equationJustinL}
for every $u,v\in\F(A)$.

As an example, we have $Pal(b^{-1})=b^{-1}$ and $\Pal(ab^{-1})=\Pal(a)R_a(Pal(b^{-1}))=aR_a(b^{-1})=a(ba)^{-1}=
b^{-1}$. This shows
that the extension of $\Pal$ to $\F(A)$ is not injective.
In the case of a binary alphabet, one can characterize
the kernel of $\Pal$ as follows. 

Let $B_3$ be the braid group on three strands defined as
\begin{displaymath}
B_3=\langle \sigma_1,\sigma_2\mid \sigma_1\sigma_2\sigma_1=\sigma_2\sigma_1\sigma_2\rangle.
\end{displaymath}
Let $\beta:\F(a,b)\to B_3$ be the morphism $\beta:a\mapsto \sigma_1,b\mapsto\sigma_2$.
For any $u,v\in \F(a,b)$,
one has
$\Pal(u)=\Pal(v)$ if and only if $\beta(u^{-1}v)\in\langle \sigma_1\sigma_2^{-1}\sigma_1^{-1}\rangle$ (see~\cite[Proposition 5.2]{KasselReutenauer2008}).
No such characterization is known on more than two letters.
\section{Semidirect products, cocycles and sequential functions}
We discuss now several interpretations of Justin's Formula.
\paragraph{Semidirect products}
Observe first that Equation~\eqref{eqSequential}
(and thus also Equation~\eqref{equationJustinR}) can be expressed in terms
of semidirect products. Indeed, consider the semidirect product
$\F(A)*_\alpha\F(A)$
of $\F(A)$ with itself corresponding to the morphism $\alpha$
from $\F(A)$ into $\Aut(\F(A))$. By definition, it is the set
of pairs $(u,v)\in\F(A)\times\F(A)$ with the product
\begin{displaymath}
(u,v)(r,s)=(u\alpha_v(r),vs)
\end{displaymath}
Equation~\eqref{eqSequential} expresses the fact that $\delta:w\mapsto(f(w),w)$
is a morphism from $\F(A)$ to $\F(A)*_\alpha\F(a)$. Indeed, assuming
\eqref{eqSequential}, we have for every $u,v\in\F(A)$,
\begin{displaymath}
\delta(u)\delta(v)=(f(u),u)(f(v),v)=(f(u)\alpha_u(f(v)),uv)=f(uv),uv)=\delta(uv).
\end{displaymath}
This proves the following statement.
\begin{proposition}
The map $u\mapsto (\Pal(u),u)$ is the unique morphism
from $\F(A)$ to $\F(A)*_R\F(A)$ sending every $a\in A$ to $(a,a)$.
\end{proposition}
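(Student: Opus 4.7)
The plan is to deduce the proposition directly from Justin's formula together with the universal property of the free group; the statement is essentially a repackaging of these two ingredients in the language of semidirect products.

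First, I would verify that $\delta\colon u\mapsto(\Pal(u),u)$ is a morphism $\F(A)\to \F(A)*_R\F(A)$. Multiplicativity in the second coordinate is automatic, and in the first coordinate the identity
$$\delta(u)\delta(v)=\bigl(\Pal(u)\,R_u(\Pal(v)),\,uv\bigr)=\bigl(\Pal(uv),uv\bigr)=\delta(uv)$$
is exactly Justin's formula~\eqref{equationJustinR}, which holds for all $u,v\in \F(A)$ by the extension established in the Theorem. This is the very computation already displayed just before the statement. Moreover, on generators one has $\delta(a)=(\Pal(a),a)=(a,a)$ for every $a\in A$, since the extension of $\Pal$ fixes each letter.

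For uniqueness, I would invoke the universal property of $\F(A)$ as the free group on $A$: any group morphism out of $\F(A)$ is determined by its values on the generating set $A$. Hence any morphism $\F(A)\to \F(A)*_R\F(A)$ sending each $a\in A$ to $(a,a)$ must coincide with $\delta$. There is no real obstacle here; the only nontrivial input is Justin's formula on the whole free group, which has already been secured by the Theorem, and everything else is formal. As a sanity check, one verifies compatibility with inversion: $(a,a)(a^{-1},a^{-1})=(a\,R_a(a^{-1}),1)=(1,1)$ using $R_a(a)=a$, which is consistent with $\Pal(a^{-1})=a^{-1}$.
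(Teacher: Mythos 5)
Your proof is correct and follows essentially the same route as the paper: the morphism property of $u\mapsto(\Pal(u),u)$ is exactly the displayed computation using Justin's formula \eqref{equationJustinR} for the extended $\Pal$, and uniqueness is the immediate consequence of $\F(A)$ being free on $A$ together with $\Pal(a)=a$. Nothing is missing; the inversion check is a harmless extra.
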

\paragraph{Cocycles}
Justin's Formula is also related to the notion of nonabelian
group cohomology, as pointed out in~\cite{KasselReutenauer2008}. A
 function $f$, from $\F(A)$
to itself, is a 1-\emph{cocycle}, with respect to a
group morphism $\alpha:u\mapsto \alpha_u$ from $\F(A)$ to $\Aut(\F(A))$,
if \eqref{eqSequential} holds for all $u,v\in\F(A)$.
Thus $\Pal$, as a function from $\F(A)$ to itself, is
a 1-cocycle with respect to the morphism $R$.
Such a 1-cocycle is \emph{trivial} if there is an
element $x\in\F(A)$ such that 
\begin{equation}
f(u)=x^{-1}\alpha_u(x)\label{eqTrivial}
\end{equation}
for every $u\in \F(A)$. When $A$ has two elements, one has
$\Pal(u)=(ab)^{-1}R_u(ab)$
by \cite[Equation (3.1)]{KasselReutenauer2008}. Thus the 1-cocycle $\Pal$
is trivial. This is not the case on more than two letters.
Indeed, suppose that $x\in \F(A)$ is such that
$\Pal(u)=x^{-1}R_u(x)$ for all $u\in\F(A)$. One has then
$xa=R_a(x)$ for every $a\in A$ and thus, by taking the $a$-degree:
$|x|_a+1=|x|$. This implies, by summing over all $a\in A$, $|x|(\Card(A)-1)=\Card(A)$,
which is impossible for $\Card(A)\ge 3$ since $|x|$ is an integer.

\paragraph{Sequential functions}
Equation~\eqref{equationJustinR} can also be seen as expressing that,
as a function from $A^*$ to $A^*$, the map
$\Pal$ is a \emph{sequential function}, that is, a function computed by
a \emph{sequential transducer}. Let us recall the definition
of a sequential transducer on a set $Q$ of states. Let
\begin{displaymath}
(q,a)\in Q\times A\mapsto q\cdot a\in Q
\end{displaymath}
be a map, called the \emph{transition function}. This map extends to a right action of $A^*$ on $Q$
by $q\cdot (ua)=(q\cdot u)\cdot a$ for $u\in A^*$ and $a\in A$.
In addition, let
\begin{displaymath}
(q,a)\in Q\times A\mapsto q* a\in A^*
\end{displaymath}
be a map called the \emph{output function}. This map extends to a map from $Q\times A^*$ to $A^*$ by
\begin{displaymath}
q*(ua)=(q*u)((q\cdot u)*a).
\end{displaymath}
Given a sequential transducer on $Q$ defined by the maps $(q,a)\mapsto q\cdot a$ and $(q,a)\mapsto q*a$ and given an initial state
$i\in Q$, the function $f:A^*\to A^*$ defined by the
transducer is 
\begin{displaymath}
f(w)=i*w
\end{displaymath}
\begin{proposition}\label{propositionSequential}
  The function $\Pal$ is defined by the transducer on the set of
  states $\Aut(\F(A))$
with transition and output functions 
\begin{displaymath}
R_u\cdot a=R_{ua}\mbox{ and } R_u*a=R_u(a)
\end{displaymath}
respectively, and with initial state $i=R_1$.
\end{proposition}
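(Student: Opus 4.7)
The plan is to verify that the prescription for the transducer is well-defined on $\Aut(\F(A))$, and then to prove $R_1 * w = \Pal(w)$ for every $w\in A^*$ by induction on $|w|$.

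I would first check well-definedness. The rules $R_u\cdot a = R_{ua}$ and $R_u * a = R_u(a)$ specify the transition and output through a representative $u$ of the state $R_u$; since $R\colon\F(A)\to\Aut(\F(A))$ is a morphism, $R_u = R_{u'}$ entails $R_{ua} = R_u R_a = R_{u'} R_a = R_{u'a}$ and of course $R_u(a) = R_{u'}(a)$, so the two maps descend to genuine functions on the state set. A straightforward induction using $q\cdot(va) = (q\cdot v)\cdot a$ and the morphism property of $R$ then gives $R_1\cdot w = R_w$ for every $w\in A^*$.

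The core of the proof is the induction $R_1 * w = \Pal(w)$, which I would run on $|w|$ with base case $w = a$: by the definition of $*$ on a single letter, $R_1 * a = R_1(a) = a = \Pal(a)$. For the inductive step write $w = ua$ with $u \in A^*$ non-empty; the extension rule for the output together with the induction hypothesis and the computation $R_1\cdot u = R_u$ yields
\[
R_1 * (ua) \;=\; (R_1 * u)\,\bigl((R_1 \cdot u) * a\bigr) \;=\; \Pal(u)\,(R_u * a) \;=\; \Pal(u)\,R_u(a).
\]
On the other hand, Justin's formula \eqref{equationJustinR} applied with $v = a$, combined with $\Pal(a)=a$, gives $\Pal(ua) = \Pal(u)\,R_u(\Pal(a)) = \Pal(u)\,R_u(a)$. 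The two expressions coincide, closing the induction.

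There is no genuine obstacle in the argument: its entire content is the observation that Justin's formula, specialised to appending a single letter, is precisely the recursion that a sequential transducer with states $R_u$ and output letter $R_u(a)$ is designed to compute. The only care required is the well-definedness on the set of automorphisms and the alignment of the base case, both of which are immediate.
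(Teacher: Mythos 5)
Your proof is correct and follows essentially the same route as the paper: induction on $|w|$, with the inductive step reducing to Justin's formula $\Pal(ua)=\Pal(u)R_u(a)$ via the output-extension rule and $R_1\cdot u=R_u$. The only additions are the explicit well-definedness check on the state set and the identity $R_1\cdot w=R_w$, which the paper leaves implicit (and the paper's base case is $w=1$ rather than $w=a$); these are harmless refinements, not a different argument.
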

\begin{proof}
We prove by induction on the length of $w\in A^*$ that $\Pal(w)=i*w$.
It is true for $w=1$ and next, assuming that $\Pal(u)=i*u$,
we obtain for every $a\in A$,
\begin{displaymath}
i*(ua)=(i*u)((i\cdot u)*a)=\Pal(u)R_u(a)=\Pal(ua).
\end{displaymath}
\end{proof}

\section{Uniform continuity of $\Pal$ for the profinite distance}
The {\it profinite topology} on the free group $\F(A)$ is the topology
generated by the inverse images of subsets of a finite
group $F$ by a morphism $\varphi:\F(A)\to F$. Equivalently, it is
the coarsest topology such that every morphism to a finite discrete group is
continuous. This topology on the free group
was introduced by Hall  (see~\cite{Hall1950}).
It is a particular case of a more general notion 
which extends to varieties of groups and
also of semigroups (see~\cite{RibesZalesskii2010}
and~\cite{Almeida2005}).

The following is proved in~\cite{KasselReutenauer2008} for the case of a binary alphabet.

\begin{proposition}\label{propositionProfinite}
The map $\Pal:\F(A)\to \F(A)$ is continuous for the profinite topology.
\end{proposition}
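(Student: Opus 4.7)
The plan is to reduce the proposition to producing, for each characteristic finite-index normal subgroup $M$ of $\F(A)$, a finite-index normal subgroup $N\subset\F(A)$ with $\Pal(N)\subset M$; such an $N$ will then be obtained as the kernel of a group morphism from $\F(A)$ to an explicit finite semidirect product built from $M$.

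\emph{Reduction to continuity at $1$.} By Justin's formula~\eqref{equationJustinR}, $\Pal(u)^{-1}\Pal(uv)=R_u(\Pal(v))$, so the condition $\Pal(uv)\equiv\Pal(u)\pmod K$ is equivalent to $R_u(\Pal(v))\in K$. Since every finite-index normal subgroup of $\F(A)$ contains a characteristic finite-index subgroup (namely the intersection of the finitely many subgroups of index at most $[\F(A):K]$), I may assume $K=M$ is characteristic; then $R_u(M)=M$ for all $u$, so the condition becomes simply $\Pal(v)\in M$, uniformly in $u$. Hence uniform continuity will follow as soon as $\Pal(N)\subset M$ for some finite-index normal $N$.

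\emph{Constructing $N$.} Fix a characteristic finite-index $M$. Because $M$ is characteristic, the morphism $R\colon \F(A)\to\Aut(\F(A))$ descends to a morphism $\bar R\colon\F(A)\to\Aut(\F(A)/M)$ into a \emph{finite} group. Its kernel $N_1$ is therefore a finite-index normal subgroup of $\F(A)$, and $\bar R$ factors through $\F(A)/N_1$. I then form the finite semidirect product
\begin{displaymath}
G:=(\F(A)/M)\rtimes_{\bar R}(\F(A)/N_1)
\end{displaymath}
and the map $\tilde\Pal\colon\F(A)\to G$ defined by $u\mapsto(\Pal(u)\bmod M,\ u\bmod N_1)$. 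Parallel to the semidirect-product reformulation of Justin's formula given at the beginning of Section~3, a direct computation yields
\begin{displaymath}
\tilde\Pal(u)\tilde\Pal(v)=(\Pal(u)\,R_u(\Pal(v))\bmod M,\ uv\bmod N_1)=\tilde\Pal(uv),
\end{displaymath}
so $\tilde\Pal$ is a group morphism. Its kernel $N$ is then finite-index normal in $\F(A)$, and $\tilde\Pal(v)=1$ forces $\Pal(v)\in M$; this is the subgroup we wanted.

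\emph{Main obstacle.} The delicate ingredient is arranging for $R$ to descend into a finite group: this simultaneously needs $M$ to be characteristic (so $\bar R$ is well-defined on the quotient) and $\Aut(\F(A)/M)$ to be finite (so that $N_1=\ker\bar R$ has finite index). Once this is in place, the semidirect-product reinterpretation from Section~3 descends mechanically to the finite quotient, and finiteness of $\ker\tilde\Pal$ yields the required $N$ automatically.
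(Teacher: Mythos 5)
Your proof is correct, but it takes a genuinely different route from the paper's. The paper establishes uniform continuity by a transducer-style argument: given a morphism $\varphi\colon\F(A)\to G$ with $G$ finite, it defines a right action of $\F(A)$ on the finite set $Q=G\times\Hom(\F(A),G)$ by $(g,\pi)\cdot w=(g\pi(\Pal(w)),\pi\circ R_w)$ (associativity of this action is exactly Justin's formula), and then factors $\varphi\circ\Pal$ as $h\circ\psi$ with $\psi\colon\F(A)\to\Sg_Q$ the induced morphism into the symmetric group on $Q$, invoking two factorization criteria for uniform continuity. You instead replace the given finite-index subgroup by a characteristic one $M$ (so that $R_u(M)=M$ uniformly in $u$), descend $R$ to $\Aut(\F(A)/M)$, push the semidirect-product morphism $w\mapsto(\Pal(w),w)$ of Section~3 into the finite group $(\F(A)/M)\rtimes_{\bar R}(\F(A)/N_1)$, and take the kernel $N$ of the resulting morphism, which satisfies $\Pal(N)\subset M$. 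Both arguments ultimately rest on Justin's formula making $\Pal$ a cocycle, i.e.\ making $w\mapsto(\Pal(w),w)$ a morphism into a semidirect product; the difference is how the finite quotient is manufactured. The paper's trick of carrying $\Hom(\F(A),G)$ inside the state set absorbs the twisting $\pi\mapsto\pi\circ R_w$ without ever invoking characteristic subgroups or automorphism groups of finite quotients, whereas your reduction uses the (standard, but worth stating explicitly) facts that $\F(A)$ is finitely generated, so the intersection of the finitely many subgroups of index at most $[\F(A):K]$ is characteristic of finite index, and that a characteristic $M$ lets $R$ descend. In exchange, your construction exhibits an explicit finite-index normal subgroup $N$ with $\Pal(N)\subset M$, which yields uniform continuity directly and cleanly; the only step you leave implicit is the routine translation back to the metric, namely that any finite-index normal subgroup $N$ contains the intersection of the kernels of all morphisms onto groups of order at most $[\F(A):N]$, so membership in $N$ is indeed forced by a bound on the profinite distance.
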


Actually we shall prove a slightly more general result. Following \cite{Almeida2005} p. 57, we define the {\it profinite distance} $d$ 
on $FG(A)$ by the formula 
$$
d(u,v)=\frac{1}{r(u,v)}
$$
where $u,v$ are distinct, and where $r(u,v)$ is the minimal cardinality of a group $G$ such that, for some morphism $\varphi:FG(A)
\to G$, $\varphi(u)\neq \varphi(v)$. Since $FG(A)$ is residually finite, $r(u,v)$ is a finite integer for each pair $u,v$ of distinct 
elements in $FG(A)$. It is easy to prove that $r(u,w)\geq \min(r(u,v),r(v,w))$, hence
$$
d(u,w)\leq \max(d(u,v),d(v,w)),
$$
which means that $d$ is an ultrametric distance.

The topology of $FG(A)$ induced by $d$ is precisely the profinite topology.

%
%

Proposition \ref{propositionProfinite} follows from the next result, since each uniformly continuous function is continuous.

\begin{proposition}\label{propositionProfiniteU}
  The map $\Pal:\F(A)\to \F(A)$
  is uniformly continuous for the profinite distance.
\end{proposition}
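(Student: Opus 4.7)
The plan is to prove the equivalent quantitative statement: for every positive integer $N$, there exists an integer $M$ such that $r(u,v) > M$ implies $r(\Pal(u),\Pal(v)) > N$ for all $u,v \in \F(A)$; this is exactly uniform continuity for the profinite distance, applied to radii $1/N$ and $1/M$. The key is to exploit the semidirect product morphism $\delta: w \mapsto (\Pal(w), w)$ from $\F(A)$ to $\F(A)*_R\F(A)$ established earlier. I will produce a finite quotient of $\F(A)*_R\F(A)$ compatible with the prescribed precision $N$; composing $\delta$ with the associated projection will yield a morphism from $\F(A)$ to a finite group that controls $\Pal(u)$ modulo any normal subgroup of finite index of our choosing.

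Given $N$, let $K^*$ denote the intersection of all normal subgroups of $\F(A)$ of index at most $N$. Because $\F(A)$ is finitely generated, only finitely many such subgroups exist, so $K^*$ has finite index. Moreover, any automorphism of $\F(A)$ permutes the normal subgroups of each index, so $K^*$ is a \emph{characteristic} subgroup. Setting $G = \F(A)/K^*$, the characteristic-ness of $K^*$ ensures that each $R_u$ descends to an automorphism $\bar R_u$ of $G$, and $u \mapsto \bar R_u$ is then a morphism from $\F(A)$ to the finite group $\Aut(G)$.

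Form the finite group $H = G \rtimes \Aut(G)$. A direct check using the identity $\pi(R_v(r)) = \bar R_v(\pi(r))$, where $\pi: \F(A) \to G$ is the projection, shows that $(u,v) \mapsto (\pi(u), \bar R_v)$ is a morphism from $\F(A) *_R \F(A)$ to $H$. Composing with $\delta$ produces a morphism $\Phi: \F(A) \to H$ given by $w \mapsto (\pi(\Pal(w)), \bar R_w)$. Take $M = |H|$. If $r(u,v) > M$, then no morphism from $\F(A)$ to a group of size at most $M$ can separate $u$ and $v$; in particular $\Phi(u) = \Phi(v)$, hence $\pi(\Pal(u)) = \pi(\Pal(v))$. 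Thus $\Pal(u)$ and $\Pal(v)$ agree modulo $K^*$, and therefore modulo every normal subgroup of index at most $N$, which gives $r(\Pal(u),\Pal(v)) > N$, as required.

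The step I expect to be the main (though still routine) subtlety is ensuring that $K^*$ is characteristic, not merely normal: without this property the automorphisms $R_u$ would fail to descend to $G$, and the finite target $H$ could not be constructed. Taking the intersection over \emph{all} normal subgroups of index at most $N$, rather than the kernel of a single morphism to a group of size at most $N$, is precisely what secures this characteristic-ness and makes the semidirect product construction go through.
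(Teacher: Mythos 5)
Your proof is correct, and it takes a genuinely different route from the paper's. You work directly with the quantitative form of uniform continuity for the ultrametric $d=1/r$, and for a prescribed precision $N$ you build a single finite target: the characteristic subgroup $K^*$ (the intersection of the finitely many normal subgroups of index at most $N$, finiteness coming from finite generation of $\F(A)$) lets the morphism $R$ descend to $\Aut(G)$ for $G=\F(A)/K^*$, so the morphism $\delta\colon w\mapsto(\Pal(w),w)$ into $\F(A)*_R\F(A)$ pushes forward to the finite holomorph $G\rtimes\Aut(G)$, and $M=\Card(G\rtimes\Aut(G))$ is an explicit modulus. The paper instead fixes an arbitrary morphism $\varphi\colon \F(A)\to G$ into a finite group, invokes two standard criteria for uniform continuity, and uses Justin's formula to define a right action of $\F(A)$ on the finite set $G\times\Hom(\F(A),G)$, factorizing $\varphi\circ\Pal$ through the symmetric group of that set; keeping the twisted morphism $\varphi\circ R_w$ as part of the state is exactly what spares the paper any appeal to characteristic subgroups. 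Your version buys an explicit $N\mapsto M$ bound and a purely group-theoretic statement (one morphism to a finite group per precision level), at the cost of the auxiliary fact that a finitely generated group has only finitely many subgroups of bounded index; the paper's version handles each finite quotient $\varphi$ separately and needs no such input. Both arguments run on the same engine, namely that Justin's formula makes $w\mapsto(\Pal(w),w)$ a morphism into the semidirect product, whose image can be rendered finite.
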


We need in the proof the following characterizations of uniformly continuous functions.

1. A function $P:FG(A)\to FG(A)$ is uniformly continuous if and only if for any morphism $\varphi: FG(A)\to G$, the function $\varphi\circ P:FG(A)\to G$ is uniformly continuous, where $G$ has the discrete distance.

2. A function $P':\F(A)\to G$, with $G$ a finite group with discrete distance, is uniformly continuous if and only if it factorizes as $P'=h\circ 
\psi$, where $\psi:FG(A)\to S$ is a morphism into a finite group, and $h:S\to G$ is some function.

\begin{proof} We apply the first criterion above to the function $P=\Pal$. Thus 
let $\varphi:\F(A)\to G$ be a group morphism into a finite group $G$. 

We define a right action of $\F(A)$ on the finite set $Q=G\times \Hom(\F(A),G)$
by
\begin{equation}
(g,\pi)\cdot w=(g\pi(\Pal(w)),\pi\circ R_w).\label{eqProfinite}
\end{equation}

It is indeed a right action, since the associativity follows from
\begin{eqnarray*}
((g,\pi)\cdot u)\cdot v&=&(g\pi(\Pal(u)),\pi\circ R_u)\cdot v\\
&=&(g\pi(\Pal(u))\pi\circ R_u(\Pal(v)),\pi\circ R_u\circ R_v)\\
&=&(g\pi(\Pal(u)R_u(\Pal(v))),\pi\circ R_{uv})\\
&=&(g\pi(\Pal(uv)),\pi\circ R_{uv})=(g,\pi)\cdot (uv).
\end{eqnarray*}
It follows from Equation~\eqref{eqProfinite} that for every $w\in \F(A)$,
one has
\begin{displaymath}
(1_G,\varphi)\cdot w=(\varphi(\Pal(w)),\varphi\circ R_w).
\end{displaymath}

In order to apply the second criterion, we define now a morphism $\psi:\F(A)\to S$, where $S$ is the symmetric group
on $Q$:
for any $w$ in $\F(A)$, $\psi(w)$ is the permutation $q\mapsto q\cdot w$. Then $\psi$ is a morphism, because of the right action defined above, letting $S$ act on the right on $Q$. Note that $S$ is a finite group, since $Q$ is finite.

Define a function $h:S\to G$; it sends each element $\sigma\in S$ onto the first component $g$ of the pair $(g,\pi)=(1_G,\varphi)\sigma$.
Thus we have $h\circ\psi(w)=$ first component of $(1_G,\varphi)\cdot w$, which is equal to $\varphi\circ \Pal(w)$; thus $h\circ \psi=\varphi\circ \Pal$, which allows to conclude, according to the second criterion, that $\varphi\circ\Pal$ is uniformly continuous. With the first criterion, we see that $\Pal$ is uniformly continuous.
%
\end{proof}
Since $A^*$ is dense in $\F(A)$ for the profinite topology
(see~\cite{Almeida2005} or \cite{AlmeidaCostaKyriakoglouPerrin2020} for example),
we deduce from Proposition~\ref{propositionProfinite} the following statement.
\begin{corollary}
The map $\Pal:\F(A)\to\F(A)$ is the unique continuous extension to $\F(A)$
of the iterated palindromic closure of $A^*$.
\end{corollary}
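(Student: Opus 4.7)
The plan is to deduce the corollary from Proposition~\ref{propositionProfinite} together with the cited density of $A^*$ in $\F(A)$, by invoking the standard topological principle that two continuous maps into a Hausdorff space that agree on a dense subset must coincide.

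For existence of a continuous extension, I would simply note that $\Pal:\F(A)\to\F(A)$, as defined earlier in the paper, restricts to the iterated palindromic closure on $A^*$ (this is built into the construction via the uniqueness in the Proposition following the Theorem), and is continuous by Proposition~\ref{propositionProfinite}. So the existence part is immediate.

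For uniqueness, suppose $g:\F(A)\to\F(A)$ is another continuous extension. Then $g$ and $\Pal$ agree on $A^*$. The profinite topology on $\F(A)$ is induced by the ultrametric distance $d$ recalled in the preceding section, hence is Hausdorff (in fact metric). Since $A^*$ is dense in $\F(A)$ for this topology, any $w\in\F(A)$ is the limit of a sequence $(w_n)$ in $A^*$, and by continuity
\[
g(w)=\lim_n g(w_n)=\lim_n \Pal(w_n)=\Pal(w).
\]
So $g=\Pal$.

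There is essentially no obstacle here: the only nontrivial ingredients (continuity of $\Pal$ and density of $A^*$) have already been established or cited. The one point worth stating carefully is that $\F(A)$ with the profinite topology is Hausdorff, which is immediate from the fact that $d$ is an actual distance (equivalently, from the residual finiteness of $\F(A)$, which is used to define $d$ in the first place). Everything else is a one-line application of the dense-agreement principle.
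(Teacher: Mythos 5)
Your proof is correct and follows essentially the same route as the paper, which deduces the corollary directly from Proposition~\ref{propositionProfinite} together with the density of $A^*$ in $\F(A)$; you merely spell out the standard dense-agreement argument (legitimately using sequences, since the profinite topology is given by the ultrametric $d$ and is in particular Hausdorff). No gap.
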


Though $\Pal$ is continuous for the profinite topology, it is not continuous for the pro-$p$ topology on the free group $F_2$ of rank $2$, where $p$ is a prime number.
Recall that the pro-$p$ topology is the coarsest topology
such that every group homomorphism from $F_2$ into a finite $p$-group
is continuous (see \cite[Remark 6.3]{KasselReutenauer2008}).

\section{Suffix automaton}\label{sectionSuffixAutomaton}

The minimal automaton of the set of suffixes of a word $w$
is called the \emph{suffix automaton} of $w$. These
automata has been extensively studied (see~\cite[Chapter 2]{Lothaire2005})
A striking property (originally due to \cite{Hausler1985})
is that its number of states is at most $2|w|-1$.

Let $u$ be a word over an arbitrary alphabet $A$.
We denote by $\Suf(u)$ the suffix automaton of
 $\Pal(u)$.
 
Part (i) of the following result is not new, but we give a proof for sake of completeness.

\begin{theorem}\label{theoremPalSuf}
  The automaton $\Suf(u)$  has the following properties:
  \begin{enumerate}
  \item[\rm(i)] It has $|\Pal(u)|+1$ states, which may be naturally identified
    with the prefixes of $\Pal(u)$.
  \item[\rm(ii)] Its terminal states are the palindromic prefixes of $\Pal(u)$.
    
    \end{enumerate}
  
\end{theorem}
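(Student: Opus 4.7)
My plan is to establish (i) by matching lower and upper bounds on the number of states, and then to derive (ii). First, for any word $w$, I will argue that distinct prefixes of $w$ yield distinct states in its suffix automaton. Indeed, if $p,q$ are prefixes of $w$ with $|p|<|q|$, writing $w=px$ shows that $x$ is a suffix of $w$, hence $x$ lies in $p^{-1}\mathrm{Suf}(w)$; but $|qx|>|w|$, so $qx$ cannot be a suffix of $w$ and $x\notin q^{-1}\mathrm{Suf}(w)$. Thus the left quotients differ, and applied to $w=\Pal(u)$ this shows that the $|\Pal(u)|+1$ prefixes give rise to $|\Pal(u)|+1$ distinct states in $\Suf(u)$.

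The matching upper bound is the main content: I will show that every factor $v$ of $w:=\Pal(u)$ has the same right quotient as some prefix. Let $e_1$ be the smallest end-position of $v$ in $w$ and set $p:=w_1\cdots w_{e_1}$, so that $v$ is a suffix of $p$. The inclusion $p^{-1}\mathrm{Suf}(w)\subseteq v^{-1}\mathrm{Suf}(w)$ is immediate from the fact that any suffix starting with $p$ also starts with $v$. The reverse inclusion reduces to the following combinatorial property, which will be the heart of the argument: every occurrence of $v$ in $w$ is preceded by the initial block $w_1\cdots w_{e_1-|v|}$. My plan is to prove this by induction on $|u|$, using Justin's formula in both forms $\Pal(ua)=L_u(a)\Pal(u)=\Pal(u)R_u(a)$. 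The passage from $\Pal(u)$ to $\Pal(ua)$ places $\Pal(u)$ simultaneously as a prefix and as a suffix of $\Pal(ua)$, and any new occurrence of a factor that straddles the appended block $L_u(a)$ or $R_u(a)$ must, by the palindromic symmetry of $\Pal(ua)$ together with the inductive hypothesis applied to $\Pal(u)$, inherit the left-context of its leftmost occurrence. The principal obstacle is precisely this inductive step, which rests on the rich palindromic structure specific to iterated palindromic closures; I would need to carry out the case distinction according to whether a given occurrence of $v$ lies entirely inside the copy of $\Pal(u)$ or straddles the newly created letters.

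For (ii), a state identified with a prefix $p$ is terminal if and only if the empty word belongs to its right-extension set, i.e.\ if and only if $p$ itself is a suffix of $\Pal(u)$. Since $\Pal(u)$ is a palindrome by Proposition~\ref{propositionPalisPal}, its suffix of length $|p|$ equals $\widetilde{p}$; hence $p$ is simultaneously a prefix and a suffix of $\Pal(u)$ exactly when $p=\widetilde{p}$, i.e.\ when $p$ is a palindrome. The terminal states of $\Suf(u)$ are therefore precisely the palindromic prefixes of $\Pal(u)$.
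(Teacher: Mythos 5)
Your setup is sound: the lower bound (distinct prefixes of $\Pal(u)$ give distinct residuals) is correct, your reduction of the upper bound to the combinatorial property ``every occurrence of a factor $v$ of $w=\Pal(u)$ is immediately preceded by the initial block $w_1\cdots w_{e_1-|v|}$ determined by its first occurrence'' is exactly the right target, and your argument for (ii) (a prefix of a palindrome is simultaneously a suffix if and only if it is itself a palindrome) is complete. But the heart of the theorem is precisely that combinatorial property, and your proposal does not prove it: you announce an induction on $|u|$ via Justin's formula and then state that ``the principal obstacle is precisely this inductive step,'' leaving the case of occurrences straddling the new material in the passage from $\Pal(u)$ to $\Pal(ua)$ unhandled. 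The appeal to ``palindromic symmetry together with the inductive hypothesis'' is not an argument; note that the property is false for arbitrary words (take $w=abcb$, $v=b$: the second occurrence of $b$ is preceded by $c$, not $a$), so all the content of the theorem sits exactly in the step you have skipped.

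For comparison, the paper does not attempt such an induction on $|u|$. It imports the Droubay--Justin--Pirillo result (Proposition~\ref{propositionDJP}), whence Corollary~\ref{corollaryLS}: every left-special factor of $\Pal(u)$ is a prefix of $\Pal(u)$. With this in hand, the left-extension property you need follows by a short backward induction: if $p''w$ is not a prefix of $\Pal(u)$ it is not left-special, so any suffix occurrence of $p''w$ is preceded by the same letter as the occurrence inside the first occurrence of $w$, and one prepends the letters of the minimal prefix one at a time. If you insist on your route, you would in effect be reproving the DJP theorem by induction on $|u|$ (using $\Pal(ua)=\Pal(u)\Pal(u_1)^{-1}\Pal(u)$ or $\Pal(u)a\Pal(u)$ and analysing occurrences that lie in the prefix copy, the suffix copy, or straddle the overlap); that is doable but is real work, and as written your proposal has a genuine gap at exactly that point. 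Either carry out that case analysis in full or invoke Corollary~\ref{corollaryLS} as the paper does.
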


This result, for a binary alphabet, is due to
~\cite{EpifanioMignosiShallitVenturini2007}
((i) in the binary case also follows from Theorem 1 in \cite{SciortinoZamboni2007}, which characterizes the binary words whose suffix automaton has $|w|+1$ states); see also \cite{EpifanioMignosiShallitVenturini2007},
\cite{EpifanioFrougnyGabrieleMignosiShallit2012},
\cite{BugeaudReutenauer2022}.
Moreover, for general alphabets, Part (i) of the theorem is a consequence of the remark in Section 5 in Fici's article \cite{Fici2011}. Note that 
that characterizations of the words $w$ such that the suffix automaton of $w$ has $|w|+1$ states have been given by Fici \cite{Fici2011} and Richomme \cite{Richomme2017}.

A factor $w$ of a word $u$ (resp. an infinite word $s$) is \emph{left-special} 
if there are at least two letters $a$ such that $aw$ is a factor of $u$
(resp. $s$).

The following is from \cite[Proposition 5]{DroubayJustinPirillo2001}
(see also~\cite[Proposition 1.5.11]{DurandPerrin2021}). Recall that for any infinite word $x$, the infinite word $\Pal(x)$ is well-defined. 

\begin{proposition}\label{propositionDJP}
  If $s=\Pal(x)$ for some infinite word $x$, the left-special
  factors of $s$ 
  are the prefixes of $s$.
\end{proposition}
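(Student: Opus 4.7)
The plan is to use strong induction on the length of a left-special factor, based on three ingredients. First, the set of factors of $s$ is closed under reversal: every factor of $s$ lies inside some palindromic prefix $p_n := \Pal(x_1 \cdots x_n)$, which is a palindrome by Proposition~\ref{propositionPalisPal}. Hence $u$ is a factor of $s$ if and only if $\tilde u$ is, and in particular $u$ is left-special if and only if $\tilde u$ is right-special. Second, the palindromic prefixes of $s$ are exactly the $p_n$ for $n \geq 0$; one proves this by induction on $n$ using the structure of the palindromic closure. Third, Justin's formula gives $p_{n+1} = p_n R_{x_1\cdots x_n}(x_{n+1})$, and the word $R_{x_1 \cdots x_n}(x_{n+1})$ begins with the letter $x_{n+1}$: indeed $R_a(b)$ begins with $b$ for any letters $a, b$, and this property is preserved under composition. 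In particular the letter of $s$ in position $|p_n|$ equals $x_{n+1}$.

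For the inclusion \emph{left-special implies prefix}, I would argue by strong induction on $|u|$. The empty word is a prefix. Otherwise write $u = vc$ with $c \in A$. From $au$ and $bu$ being factors with $a \neq b$, the words $av$ and $bv$ are also factors, so $v$ is left-special; by induction $v$ is a prefix of $s$. Suppose $u$ were not a prefix; then $c$ differs from the letter $\ell$ of $s$ at position $|v|$, so $v$ is followed in $s$ by both $c$ and $\ell$, making $v$ right-special. By reversal-closure $\tilde v$ is left-special, and by induction $\tilde v$ is a prefix; uniqueness of the prefix of length $|v|$ forces $\tilde v = v$, that is, $v$ is a palindromic prefix. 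By the second ingredient $v = p_k$ for some $k$, and by the third $\ell = x_{k+1}$, so $c \neq x_{k+1}$. The task reduces to deriving a contradiction from the assertion that $u = p_k c$ is left-special with $c \neq x_{k+1}$.

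This final reduction is the principal obstacle. The strategy is to enumerate, for each $m > k$, the occurrences of $p_k$ inside $p_m$ together with the letters immediately preceding and following each occurrence. Using that $p_m$ is a palindrome containing $p_{m-1}$ as both prefix and suffix, and using the third ingredient above, one shows by induction on $m$ that every non-prefix occurrence of $p_k$ inside $p_m$ is preceded by one of $x_{k+1}, \ldots, x_m$, with the precise letter depending on which palindromic layer carries the occurrence. A careful case analysis then reveals that an occurrence of $p_k$ followed by a letter $c$ different from $x_{k+1}$ is always preceded by the same, uniquely determined letter; hence $p_k c$ has a unique left-extension in $s$, contradicting left-speciality. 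For the reverse inclusion, given a prefix $u$ of $s$, pick $k$ with $|u| \leq |p_k|$; since $u$ is a prefix of $p_k$, every occurrence of $p_k$ in $s$ preceded by $\alpha$ gives an occurrence of $u$ preceded by $\alpha$, and by the preceding analysis the set of letters preceding $p_k$ contains $\{x_j : j > k\}$. This set has cardinality at least $2$ as soon as $x$ uses more than one letter infinitely often, and in that case $u$ is left-special.
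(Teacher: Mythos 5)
Your reduction is genuinely useful and its ingredients are sound: closure of the factor set under reversal, the fact that the palindromic prefixes of $s$ are exactly the $p_n=\Pal(x_1\cdots x_n)$ (asserted rather than proved, but routine), and the fact that the letter of $s$ at position $|p_n|$ is $x_{n+1}$. The induction then correctly reduces the inclusion ``left-special $\Rightarrow$ prefix'' to the single claim that $p_kc$ with $c\neq x_{k+1}$ is never left-special. But that claim is exactly where the content of the proposition lies, and you do not prove it: ``one shows by induction on $m$\ldots'' and ``a careful case analysis then reveals\ldots'' are placeholders. The intermediate statement you do formulate (every non-prefix occurrence of $p_k$ in $p_m$ is preceded by one of $x_{k+1},\ldots,x_m$, depending on an undefined ``palindromic layer'') is too weak even if true: it does not give that, for a \emph{fixed} $c\neq x_{k+1}$, all occurrences of $p_kc$ have the same left extension, which is what you need and which is just the proposition restricted to such factors. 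So the argument stops at the hard point. For comparison, the paper gives no proof at all here: the statement is quoted from \cite[Proposition 5]{DroubayJustinPirillo2001} (see also \cite[Proposition 1.5.11]{DurandPerrin2021}), and the classical arguments there proceed not by bookkeeping of occurrences of $p_k$ but by desubstitution: by Justin's formula in the form \eqref{equationJustinL}, $s=L_{x_1}(s')$ with $s'=\Pal(x_2x_3\cdots)$; in such an image every occurrence of a letter different from $x_1$ is immediately preceded by $x_1$, so a nonempty left-special factor of $s$ begins with $x_1$, is (essentially) of the form $L_{x_1}(w)$ or $L_{x_1}(w)x_1$ with $w$ left-special in $s'$, and one concludes by induction on the length, shifting the directive word at each step. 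If you want to keep your route, you must state and prove the precise lemma on left extensions of $p_kc$; be aware it is not easier than the proposition itself.

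A second point concerns the converse inclusion. As the paper uses the proposition (only through Corollary~\ref{corollaryLS}), all that is needed is ``left-special $\Rightarrow$ prefix'', and the set equality is in fact false without a further hypothesis: for $x=abaaa\cdots$ one gets $s=(aba)^\omega$, whose prefix $ab$ is not left-special. Your treatment of this direction is accordingly conditional (``as soon as $x$ uses more than one letter infinitely often''), which is the right condition, but it means the statement as literally written is not established; moreover this direction again appeals to the unproved ``preceding analysis''. That particular containment does admit a two-line direct proof: $p_{m+1}$ is a palindrome with prefix $p_mx_{m+1}$, hence with suffix $x_{m+1}p_m$, so for $m\ge k$ the occurrence of $p_k$ at position $|p_{m+1}|-|p_m|$ is preceded by $x_{m+1}$; thus every letter $x_j$ with $j>k$ is a left extension of $p_k$. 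So either prove the forward inclusion completely and state the result in that form (which is all the paper needs), or add the hypothesis that at least two letters occur infinitely often in $x$ if you want the equality.
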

We will use the following consequence. 

\begin{corollary}\label{corollaryLS}
  For any (finite) word $u$, the left special factors of $\Pal(u)$
  are prefixes of $\Pal(u)$.
\end{corollary}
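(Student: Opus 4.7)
The plan is to bootstrap the infinite-word statement (Proposition~\ref{propositionDJP}) to the finite case by extending $u$ to an infinite word and exploiting monotonicity of $\Pal$ on prefixes, together with the obvious fact that being left-special is preserved under passage to a superword.

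First I would fix any infinite word $x$ having $u$ as a prefix (for instance $x=ua^{\omega}$ for some letter $a\in A$). The paper has already noted that $\Pal$ is monotone for the prefix order: if $u$ is a prefix of $v$, then $\Pal(u)$ is a prefix of $\Pal(v)$. Applying this to all finite prefixes of $x$ that extend $u$, one obtains that $\Pal(u)$ is a prefix of the infinite word $s:=\Pal(x)$.

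Next, let $w$ be a left-special factor of $\Pal(u)$, so there exist two distinct letters $a,b\in A$ with both $aw$ and $bw$ factors of $\Pal(u)$. Since $\Pal(u)$ is a prefix of $s$, both $aw$ and $bw$ are factors of $s$, so $w$ is a left-special factor of the infinite word $s=\Pal(x)$. By Proposition~\ref{propositionDJP}, $w$ must then be a prefix of $s$.

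Finally, since $w$ is a factor of $\Pal(u)$ we have $|w|\le |\Pal(u)|$, and since both $w$ and $\Pal(u)$ are prefixes of the same word $s$, comparing lengths forces $w$ to be a prefix of $\Pal(u)$, as desired. There is really no hard step here: the only thing to check is that an infinite extension of $u$ exists and that $\Pal$ is monotone on prefixes, both of which are already recorded in the paper.
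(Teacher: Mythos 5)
Your proof is correct and follows essentially the same route as the paper: extend $u$ to an infinite word (the paper takes $u^\omega$ rather than $ua^\omega$, an immaterial difference), note that $\Pal(u)$ is a prefix of $\Pal$ of that infinite word so left-specialness transfers, and then apply Proposition~\ref{propositionDJP} to conclude. No gap to report.
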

\begin{proof}
  Set $s=\Pal(u^\omega)$. Let $p$ be a left-special factor of $\Pal(u)$.
   Since $\Pal(u)$ is a prefix of
  $s$, the word $p$ is also left-special with respect to $s$.
  By Proposition~\ref{propositionDJP}, $p$ is a prefix of $s$
  and thus of $\Pal(u)$.
\end{proof}

Note that not all prefixes of $\Pal(u)$ need to be left-special.
For example, if $u=ab$, the factor $ab$ is not a left-special
factor of $\Pal(u)=aba$.

Given a language $L$, a \emph{residual} of $L$ is
set of the form $u^{-1}L=\{v\in A^*\mid uv\in L\}$.
It is well-known that the minimal automaton of a language $L$,
denoted $\A(L)$, has the set of nonempty residuals of $L$ as set of states.

\begin{proofof}{of Theorem~\ref{theoremPalSuf}}
   Let $1$ be the
  initial
  state of $\Suf(u)$. Let $P$ be the set of prefixes of $\Pal(u)$. The
  map $\alpha\colon p\mapsto 1\cdot p$ is injective. Indeed,
  let $p,p'\in P$ be such that $1\cdot p=1\cdot p'$.
  Assuming that $|p|\le|p'|$, let $r$ be such that $p'=pr$.
  Then $1\cdot pr=1\cdot p'=1\cdot p$ and thus $(1\cdot p)\cdot r=1\cdot p$.
  Since the language recognized by $\Suf(u)$ is finite,
  the graph of $\Suf(u)$ is acyclic, which forces $r=1$.
  Thus $p=p'$.

  Let us show now that $\alpha$ is surjective. Let
  $q$ be a state of the automaton $\Suf(u)$.
  Let
  $w$ be a word such that $1\cdot w=q$. Since the automaton is co-accessible, there is some word $s$ such that
  $1\cdot ws$ is a terminal state, and thus
  $ws$ is a suffix of $\Pal(u)$. Hence the word $w$ is a factor of $\Pal(u)$.
  Let $p$ be the shortest prefix of $\Pal(u)$ such that
  $pw$ is a prefix of $\Pal(u)$. Let us show by induction
  on the length of $p'$
  that for every suffix $p'$ of $p$, one has $1\cdot p'w=q$.
  It is true if $p'=1$. Otherwise, set $p'=ap''$.
  We have $1\cdot p''w=q$ by induction hypothesis.

  Let $s$ be an arbitrary word such that $ws$ belongs to the
  set $S$ of suffixes of $\Pal(u)$. Since $1\cdot p''w=1\cdot w$,
  and since the automaton $\Suf(u)$ is the minimal automaton
  of $S$, so that $(p''w)^{-1}S=w^{-1}S$, and $s\in w^{-1}S$, 
  we have $p''ws\in S$. We cannot have $p''ws=\Pal(u)$, since otherwise
  $p''w$ is a prefix of $\Pal(u)$ with $p''$ shorter than $p$.

  Thus there is some $b\in A$  such that
  $bp''ws\in S$ . Since $ap''w=p'w$ is a factor of $\Pal(u)$
  and since, by Corollary~\ref{corollaryLS}, $p''w$ is not left-special
  (because $p''w$ is not a prefix of $\Pal(u)$ for the same reason as above),
  we have $a=b$.
  This shows that $ws\in S\Rightarrow p'ws\in S$. The converse is
  also true and thus that $w^{-1}S=(p'w)^{-1}S$, that is $1\cdot p'w=1\cdot w=q$,
  since the automaton is minimal.

  We conclude that $1\cdot pw=q$. This shows that $\alpha$
  is surjective and proves property (i).
  Next, if $p$ is a prefix of $\Pal(u)$ which is also a suffix,
  then $p$ is a palindrome and thus property (ii) is true.
\end{proofof}

Note that the automaton $\Suf(u)$ has the additional property
\begin{enumerate}
\item[\rm(iii)] The label of an edge depends only on its end.
\end{enumerate}
Actually, this property holds for any suffix automaton, as is well-known. Indeed,
if $p,q$ are two states of the suffix automaton
of a word $w$ such that $p\cdot a=q\cdot b=r$, let $u,v,t$
be such that $1\edge{u}p\edge{a}r\edge{t}s$ and
$1\edge{v}q\edge{b}r\edge{t}s$ with $s$ a terminal state. Then $uat$ and $vbt$
are suffixes of $w$, which implies $a=b$.

\begin{example}
  Consider $u=abc$. We have $\Pal(u)=abacaba$ and the automaton $\Suf(abc)$
  is represented in Figure~\ref{figureS(abc)}.
  \begin{figure}[hbt]
    \centering
      \tikzset{node/.style={circle,draw,minimum size=0.4cm,inner sep=0.4pt}}
  \tikzset{box/.style={draw,minimum size=0.4cm,inner sep=0pt}}
  \tikzstyle{loop right}=[in=-40,out=40,loop]
  \begin{tikzpicture}
    \node[node,double](1)at(0,0){$0$};\draw[->](-.5,0)--node{}(1);
    \node[node,double](a)at(1.5,0){$1$};
    \node[node](ab)at(3,0){$2$};
    \node[node,double](aba)at(4.5,0){$3$};
    \node[node](abac)at(6,0){$4$};
    \node[node](abaca)at(7.5,0){$5$};
    \node[node](abacab)at(9,0){$6$};
    \node[node,double](abacaba)at(10.5,0){$7$};

    \draw[->,above](1)edge node{$a$}(a);
    \draw[->,above](a)edge node{$b$}(ab);
    \draw[->,above](ab)edge node{$a$}(aba);
    \draw[->,above](aba)edge node{$c$}(abac);
    \draw[->,above](abac)edge node{$a$}(abaca);
    \draw[->,above](abaca)edge node{$b$}(abacab);
    \draw[->,above](abacab)edge node{$a$}(abacaba);
    \draw[bend left=40,above,->](1)edge node{$b$}(ab);
    \draw[bend left=40,above,->](a)edge node{$c$}(abac);
    \draw[bend right=40,above,->](1)edge node{$c$}(abac);
    \end{tikzpicture}
  \caption{The automaton $\Suf(abc)$.}\label{figureS(abc)}
  \end{figure}
\end{example}

\section{Compact automata}\label{sectionCompactAutomata}
We explore the notion of compact automaton in which the
edges can be labeled by nonempty words instead of letters.
This version of automata appears, in the case of compact suffix automata,
in~\cite{BlumerEhrenfeuchtHaussler1989} or
\cite{CrochemoreVerin1997}. It is also presented in the chapter
by Maxime Crochemore 
in \cite{Lothaire2005}. In particular, the construction
of a minimal compact suffix automaton is described
(see also \cite{BlumerBlumerHausslerMcConnellEhrenfeucht1987}).
We will show here that it is possible to define in complete
generality a minimal compact automaton for every language.

A \emph{compact automaton} $\A=(Q,E,I,T)$ is given by a set of states $Q$,
a set of edges $E\subset Q\times A^+\times Q$,
a set initial states $I\subset Q$ and a set of terminal states $T\subset Q$.
A path $p_0\edge{u_0}p_1\edge{u_1}p_2\ldots \edge{u_{n-1}} p_n$ is a sequence
of consecutive edges. Its label is $u_0u_1\cdots u_{n-1}$.
The language recognized by $\A$, denoted $L(\A)$, is the set of labels
od \emph{successful} paths, that is
 paths from $I$
to $T$.

An ordinary automaton is clearly a particular case of a compact automaton.

A compact automaton is \emph{deterministic} if $\Card(I)=1$ and if for
every state $p$, the labels of the edges starting at $p$
begin with distinct letters.

Again, an ordinary deterministic automaton is deterministic
as a compact automaton.

\begin{example}
  The compact automaton of Figure~\ref{figureCompact1}
  is deterministic. Its initial state (indicated with an incoming arrow)
  is $0$ and $2$ (with a double circle) is the unique terminal state.
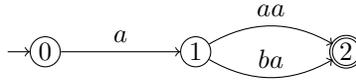
\begin{figure}[hbt]
  \centering
  \tikzset{node/.style={circle,draw,minimum size=0.4cm,inner sep=0.4pt}}
  \tikzset{box/.style={draw,minimum size=0.4cm,inner sep=0pt}}
  \tikzstyle{loop right}=[in=-40,out=40,loop]
  \begin{tikzpicture}
    \node[node](0)at(0,0){$0$};\draw[->](-.5,0)--node{}(0);
    \node[node](1)at(2,0){$1$};
    \node[node,double](2)at(4,0){$2$};

    \draw[->,above](0)edge node{$a$}(1);
    \draw[->,above,bend left](1)edge node{$aa$}(2);
    \draw[->,above,bend right](1)edge node{$ba$}(2);
    \end{tikzpicture}
\caption{A deterministic compact automaton.}\label{figureCompact1}
\end{figure}
\end{example}

The set of \emph{special} states of a compact automaton
$\A=(Q,E,I,T)$
is the set $Q_s$ of states $q$ which either belong to $I\cup T$ or such that
there are edges going out of $q$ with labels beginning with distinct
letters.

Let $p,q$ be special states.
A path $p\edge{w}q$ is \emph{special} if the only special states on
  the path are its origin and its end.

A \emph{reduction} from a deterministic compact automaton $\A=(Q,E,i,T)$ onto
a deterministic compact automaton $\A'=(Q',E',i',T')$ is a map $\varphi$
from $Q_s$ onto $Q'_s$
such that
\begin{enumerate}
\item $\varphi(i)=i'$,
\item $\varphi(p)\in T'$ if and only if $p\in T$,
\item for every $p,q\in Q_s$, there is a special
  path $p\edge{w}q$ in $\A$ if and only
    there is a special path $\varphi(p)\edge{w}\varphi(q)$ in $\A'$.
\end{enumerate}
An automaton is \emph{trim} if every state is on some successful path.
\begin{proposition}\label{same}
If $\A,\A'$ are deterministic compact automata
and if $\varphi\colon \A\to\A'$ is a reduction, then $L(\A)=L(\A')$.
\end{proposition}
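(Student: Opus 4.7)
The plan is to prove the equality $L(\A)=L(\A')$ by establishing both inclusions via a decomposition of every successful path into a chain of special paths. The key structural observation is that in a deterministic compact automaton, any successful path $i\edge{w}t$ factorises through the special states it visits: since $i\in I$ and $t\in T$ are automatically special, listing the special states encountered in order produces a factorisation $w=w_1w_2\cdots w_k$ together with special states $i=p_0,p_1,\ldots,p_k=t$ such that each piece $p_{j-1}\edge{w_j}p_j$ is a special path.

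For the inclusion $L(\A)\subseteq L(\A')$, I would take such a decomposition for a successful path in $\A$ labelled $w$ and transfer it edge-by-edge to $\A'$. The third condition of the reduction promotes each special sub-path $p_{j-1}\edge{w_j}p_j$ to a special path $\varphi(p_{j-1})\edge{w_j}\varphi(p_j)$ in $\A'$. The first condition gives $\varphi(p_0)=\varphi(i)=i'$, and the second condition together with $p_k\in T$ gives $\varphi(p_k)\in T'$. Concatenating the images yields a successful path in $\A'$ with the same label $w$.

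For the reverse inclusion $L(\A')\subseteq L(\A)$, I would lift a successful path of $\A'$ back to $\A$ inductively along its decomposition $i'=p'_0\edge{w_1}p'_1\edge{w_2}\cdots\edge{w_k}p'_k\in T'$ into special sub-paths. Set $p_0=i$, which maps to $i'$ by the first condition. Assuming inductively that $p_{j-1}\in Q_s$ has been chosen with $\varphi(p_{j-1})=p'_{j-1}$, pick any preimage $p_j\in Q_s$ of $p'_j$, which exists because $\varphi$ maps $Q_s$ onto $Q'_s$. The reverse direction of the third condition then produces a special path $p_{j-1}\edge{w_j}p_j$ in $\A$. After $k$ steps, the concatenated path $i\edge{w}p_k$ in $\A$ ends at $p_k$ with $\varphi(p_k)=p'_k\in T'$, so by the second condition $p_k\in T$, giving $w\in L(\A)$.

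The main subtlety I expect is the lifting step: the preimage $p_j$ of $p'_j$ is not a priori unique, but because $\A$ is deterministic, the trajectory starting at $p_{j-1}$ and reading $w_j$ can terminate at only one state, so the third condition is in fact consistent with determinism and the induction is well-defined. Apart from this point, the argument is just a careful bookkeeping of the natural bijection between decompositions into special paths on either side of the reduction.
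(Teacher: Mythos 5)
Your proof is correct and follows essentially the same route as the paper: decompose a successful path into special sub-paths (possible since initial and terminal states are special), transfer each piece via Condition 3, and use Conditions 1 and 2 at the endpoints, while in the converse direction you choose preimages of the special states and invoke the reverse implication of Condition 3, which is exactly what the paper does via surjectivity of $\varphi$. The closing worry about determinism is superfluous --- Condition 3 already supplies a special path between whichever preimages you pick, so the lifting needs no appeal to determinism.
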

\begin{proof}
If $w$ is in $L(\A)$, there is a path $i\edge{w}t$
with $t\in T$. Let $w=w_0w_1\cdots w_n$ be the factorisation
of $w$ such that the path has the form
$q_0\edge{w_0}q_1\edge{w_1}\cdots q_n\edge{w_n}q_{n+1}$
with each path $q_i\edge{w_i}q_{i+1}$ being special and where
$q_0=i,q_{n+1}=t$. Since $\varphi$ is a reduction,
there is for each $i$
with $0\le i\le n$, a special path $\varphi(q_i)\edge{w_i}\varphi(q_{i+1})$.
Thus there is in $\A'$ a path $i'=\varphi(i)\edge{w}\varphi(t)\in T'$,
which implies that $w$ is in $L(\A')$. 

Conversely, let $w\in L(\A')$. Then there exists a path $i'\edge{w}t'$
with $t'\in T'$. We may decompose this path as $i'=q'_0\edge{w_0}q'_1\edge{w_1}\cdots q'_n\edge{w_n}q'_{n+1}=t'$, where each 
path $q'_i\edge{w_i}q'_{i+1}$ is special.
By surjectivity of $\varphi$, we have $q'_i=\varphi(q_i)$, $q_0=i$ by Condition 1 in the definition 
of reduction, and $q_{n+1}=t\in T$ by Condition 2.
Next, there is in $\A$ a special path $q_i\edge{w_i}q_{i+1}$ by Condition 3. Thus there is in $\A$ a path $i\edge{w} t$ and consequently
$w\in L(\A)$.
\end{proof}

Given a language $L\subset A^*$, a nonempty residual $u^{-1}L$ is called
\emph{special} if either
\begin{enumerate}
  \item
    $u=1$, or
  \item $u\in L$, or
  \item there are two $v,w\in u^{-1}L$ which begin by different letters.
\end{enumerate}

The \emph{minimal compact automaton} of a language $L$, denoted $\A_c(L)$
is the following compact automaton. The set of states is the
set of special residuals of $L$.
The initial state is $L$ and the terminal states are the $u^{-1}L$
such that $u$ is in $L$. The edges are the $(p,v,q)$
such that $p=u^{-1}L$, $q=(uv)^{-1}L$ and there is no
factorization $v=v'v''$ with $v',v''$ nonempty such that $(uv')^{-1}L$
is a special residual.
By definition, $\A_c(L)$ is deterministic and all its states  are special; in particular, all its special paths are edges.
\begin{example}
  The compact automaton of Figure~\ref{figureCompact1}
  is the minimal compact automaton of the language $\{aaa,aba\}$.
  \end{example}
\begin{example}
  The compact automaton of Figure~\ref{figureCompact2} is deterministic.
  Its initial state is indicated by an incoming arrow, and all states are terminal.
\begin{figure}[hbt]
  \centering
  \tikzset{node/.style={circle,draw,minimum size=0.4cm,inner sep=0.4pt}}
  \tikzset{box/.style={draw,minimum size=0.4cm,inner sep=0pt}}
  \tikzstyle{loop right}=[in=-40,out=40,loop]
  \begin{tikzpicture}
\node[node,double](1)at(0,0){};\draw[->](-.5,0)--node{}(1);
    \node[node,double](a)at(2,0){};
    \node[node,double](aba)at(4,0){};
     \node[node,double](abacaba)at(6,0){};

    \draw[->,above](1)edge node{$a$}(a);
    \draw[->,above](a)edge node{$ba$}(aba);
    
    \draw[->,above](aba)edge node{$caba$}(abacaba);
   
    \draw[bend left=40,above,->](1)edge node{$ba$}(aba);
    \draw[bend left=40,above,->](a)edge node{$caba$}(abacaba);
    \draw[bend right=40,above,->](1)edge node{$caba$}(abacaba);
    \end{tikzpicture}
\caption{A deterministic compact automaton.}\label{figureCompact2}
\end{figure}
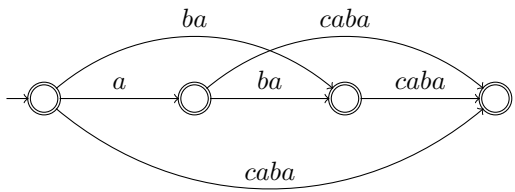
This automaton is the minimal compact automaton of the
set of suffixes of the word $abacaba=\Pal(abc)$.
\end{example}
\begin{proposition}\label{reduction} For every trim deterministic compact automaton $\A$, there is
a unique reduction from $\A$ onto the minimal compact automaton
of $L(\A)$.
\end{proposition}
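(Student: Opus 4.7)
The plan is to define $\varphi\colon Q_s\to Q'_s$ by setting $\varphi(p)=u^{-1}L(\A)$, where $u$ labels any path $i\edge{u}p$ in $\A$. Triminess of $\A$ gives existence of such a $u$, and determinism of $\A$ gives independence of the choice, since $u^{-1}L(\A)$ equals the set of labels of paths from $p$ to $T$, a quantity depending only on $p$. Conditions~1 and~2 of a reduction follow at once: $\varphi(i)=1^{-1}L(\A)=L(\A)=i'$, and $\varphi(p)$ is a terminal residual iff some (equivalently every) path label reaching $p$ lies in $L(\A)$, iff $p\in T$ by determinism.

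The technical core is a single dichotomy: reading a word $v$ from $i$ in a deterministic compact automaton ends either at a genuine state or at an interior position of some edge, and in the latter case the residual $v^{-1}L(\A)$ is non-special, because all its elements begin with the same letter (the next letter of the edge), $v\neq 1$, and $v\notin L(\A)$. This has three consequences. First, $\varphi(p)\in Q'_s$ for every special $p$: the three cases defining specialness of $p$ (initial, terminal, branching) match the three clauses defining specialness of the residual. Second, $\varphi$ is surjective, since if $v^{-1}L(\A)$ is special then the position reached by reading $v$ in $\A$ must be a state (not an edge-interior), and that state is itself special by the same case analysis. Third, a special path $p\edge{w}q$ in $\A$ corresponds to an edge $(\varphi(p),w,\varphi(q))$ in $\A_c(L(\A))$, because every strictly intermediate position on the path, whether a non-special state of $\A$ or an interior point of an edge, carries a non-special residual and hence is not a state of $\A_c(L(\A))$; since every state of $\A_c(L(\A))$ is special, this edge is itself a special path. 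Conversely, given an edge $\varphi(p)\edge{w}\varphi(q)$ in $\A_c(L(\A))$, reading $w$ from $p$ in $\A$ cannot terminate in an edge-interior (the residual there would be non-special while equal to the special residual $\varphi(q)$), and no intermediate state on this reading can be special, for otherwise the forward direction would produce a state of $\A_c(L(\A))$ strictly between $\varphi(p)$ and $\varphi(q)$ on the trajectory of $w$, contradicting that this trajectory is already a single edge.

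Uniqueness follows by the same trajectory idea: for any reduction $\psi$, condition~1 forces $\psi(i)=L(\A)$, and given a special state $p$ one decomposes a path $i\edge{u}p$ in $\A$ into consecutive special sub-paths and applies condition~3 to each, yielding a chain of special single-edge trajectories in $\A_c(L(\A))$ starting at $L(\A)$ and jointly reading $u$; determinism of $\A_c(L(\A))$ then forces the endpoint to be $u^{-1}L(\A)$, so $\psi(p)=\varphi(p)$. The main obstacle I anticipate is the converse half of condition~3, where one must identify the destination of reading $w$ from $p$ in $\A$ with the prescribed state $q$ and not merely with an arbitrary state having the same residual as $q$; this reconciliation uses determinism of $\A$ together with the observation that within the trim deterministic compact framework, the trajectory from $i$ pins down the state at each point where one stops.
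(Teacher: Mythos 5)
Your route is the same as the paper's: define $\varphi(p)=u^{-1}L$ from any path $i\edge{u}p$, check conditions 1--3 and surjectivity, and for uniqueness force $\psi(p)=u^{-1}L$. (Your uniqueness argument is a direct one, decomposing $i\edge{u}p$ into special subpaths and tracking the image through $\A_c(L)$ by determinism, where the paper instead re-roots the automata at $q$ and at $\psi(q)$ and invokes Proposition~\ref{same}; these are interchangeable.) The problem lies in your central dichotomy. It is \emph{not} true that when the reading of $v$ from $i$ stops strictly inside an edge, the residual $v^{-1}L$ is non-special. Speciality of a residual is a property of the \emph{set} $v^{-1}L$, not of the chosen representative word: clause~1 means the residual equals $L=1^{-1}L$, and ``$v\neq 1$'' does not exclude $v^{-1}L=L$. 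Concretely, let $L=(cd)^*cc$ and let $\A$ be the trim deterministic compact automaton with states $i,r,t$ ($i$ initial, $t$ terminal) and edges $i\edge{c}r$, $r\edge{dc}r$, $r\edge{c}t$; it recognizes $L$, reading $v=cd$ stops in the middle of the loop edge, and yet $(cd)^{-1}L=L$ is a special residual.

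For surjectivity this gap is harmless and easily patched: represent the residual $L$ by the word $1$, a terminal residual by a word of $L$, and for a branching residual the mid-edge case really is impossible, which is essentially what the paper does. But in the forward half of condition~3 you have no freedom in the choice of the word: you must show that no strictly intermediate position of a special path $p\edge{w}q$ of $\A$ carries a special residual, and the example above defeats exactly this step. There the path $r\edge{dc}r$ is special in $\A$, while in $\A_c(L)$ the residuals force $\varphi(r)=c^{-1}L$, and reading $d$ from $c^{-1}L$ already lands on the special state $L$, so $(\varphi(r),dc,\varphi(r))$ is neither an edge nor a special path of $\A_c(L)$. So as written your verification of condition~3 does not go through; to be fair, the paper's own one-line justification of the same step (``since otherwise the path $p\edge{w}q$ would not be special'') silently makes the same identification between non-special positions of $\A$ and non-special residuals, so this is precisely the delicate point any complete argument must confront, namely that an interior residual of a special path may coincide with a special residual (for instance with $L$ itself) reached elsewhere in the automaton.
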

\begin{proof}
  Let $\A=(Q,i,T)$ and $L=L(\A)$. Set $\A_c(L)=(R,j,S)$. We define
  a mapping $\varphi\colon Q_s\to R$ as follows. First $\varphi(i)=j$, so that Condition 1 in the definition of reduction is satisfied.
  Next, for $p\in Q_s$ let $u$ be such that $i\edge{u}p$.
  We set $\varphi(p)=u^{-1}L$. The map is well-defined because
  if $i\edge{u}p$ and $i\edge{u'}p$, then $u^{-1}L=u'^{-1}L$.
  If $p$ is in $T$, then $u$ is in $L$ and thus $\varphi(p)$
  is in $S$; conversely, if $\varphi(p)\in S$, then $u\in L$, hence $p\in T$, and Condition 2 is satisfied.
  
  If there are two edges $p\edge{av}q$ and $p\edge{a'v'}q'$
  in $\A$ with $a\ne a'$, let $w,w'$ be such that $q\edge{w}t$,
  $q'\edge{w'}t'$ and $t,t'\in T$. Then $avw,a'v'w'\in u^{-1}L$
  and thus $\varphi(p)$ is a special residual. This shows
  that $\varphi$ maps $Q_s$ into $R_s$.

  The mapping $\varphi$ is  surjective because for each $u^{-1}L$ in $R$, the state
  $p\in Q$ such that $i\edge{u}p$ is special.
  
We verify Condition 3, that is, $p\edge{w}q$ is a special path of $\A$ if and only if
$\varphi(p)\edge{w}\varphi(q)$ is an edge of $\A_c(L)$.
Indeed, if $p\edge{w}q$ is a special path, let $i\edge{u}p$
be a path in $\A$. Then $u^{-1}L\edge{w}(uw)^{-1}L$
is an edge of $\A_c$ since otherwise the path $p\edge{w}q$
would not be special. Conversely, if $\varphi(p)\edge{w}\varphi(q)$ is a special path of $\A_c(L)$, then it is an edge;
let $u$ be such that there is a path $i\edge{u}p$ if $\A$; then, by definition of edges in $\mathcal A_c(L)$, $\varphi(p)=u^{-1}L$ and $\varphi(q)=(uw)^{-1}L$; thus there is a 
path $i\edge{uw}q$ and finally, since the automaton is deterministic, a path  $p\edge{w}q$; it must be special, since $u^{-1}L
\edge{w} (uw)^{-1}L$ is an edge of $\A_c$.
  
Thus $\varphi$ is a reduction for $\A$ onto $\A_c(L)$.

We prove now uniqueness: let $\psi$ be some reduction from $\A$ onto $\A_c$.
Let $q=i\cdot u$ (that is, the unique state $q$ such that there is a path from $i$ to $q$ with label $u$). Then
$(Q,q,T)$ recognizes $u^{-1}L$. 
If $\psi(q)=k$, then it is easily verified that $\psi$ is a reduction from $(Q,q,T)$ onto $(R,k,S)$.
Hence by Proposition \ref{same}, these two automata recognize the same language.
But, since the states of $\mathcal A_c(u)$ are distinct residuals, $u^{-1}L$ is the unique state of $\A_c$ such that $(R,u^{-1}L,S)$ recognizes $u^{-1}L$.
Thus we must have $k=u^{-1}L$.
%
%
%
%
%
%
\end{proof}
Since all the states of the compact automaton $\A_c(L)$
are special, its number of states is at most the number
of special states of any compact automaton $\A$ recognizing $L$.
We have therefore the following statement which justifies
the name of minimal compact automaton for $\A_c(L)$.
\begin{corollary}\label{corollaryMinimalCompact}
  The compact automaton $\A_c(L)$ is, for every recognizable language $L$,
  the unique compact automaton with the minimal number of states
  which recognizes $L$.
\end{corollary}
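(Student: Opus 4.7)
The plan is to derive the corollary directly from Propositions~\ref{same} and~\ref{reduction}, exploiting the fact that by construction every state of $\A_c(L)$ is special. The argument falls into three short steps.

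First, I would check that $\A_c(L)$ actually recognizes $L$. Take any trim deterministic compact automaton $\A$ with $L(\A)=L$ (for example the minimal ordinary deterministic automaton of $L$, suitably trimmed, which is compact deterministic in the sense of this section). Proposition~\ref{reduction} produces a reduction from $\A$ onto $\A_c(L(\A))=\A_c(L)$, and Proposition~\ref{same} then yields $L(\A_c(L))=L(\A)=L$.

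Second, I would establish minimality. Let $\A=(Q,i,T)$ be any trim deterministic compact automaton recognizing $L$, and write $R$ for the state set of $\A_c(L)$. Proposition~\ref{reduction} produces a surjection $\varphi\colon Q_s\twoheadrightarrow R$, so $|R|\le|Q_s|\le|Q|$. Since all elements of $R$ are special by definition, this shows $\A_c(L)$ attains the minimum possible number of states.

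For uniqueness, suppose such an $\A$ also satisfies $|Q|=|R|$. The chain $|R|\le|Q_s|\le|Q|=|R|$ forces $Q_s=Q$ and $\varphi$ to be a bijection. When every state of a compact automaton is special, every special path between two special states must be a single edge, since any intermediate state would itself be special; hence in both $\A$ and $\A_c(L)$ the special paths coincide with the edges. Condition~3 in the definition of reduction then reads: there is an edge $(p,w,q)$ in $\A$ if and only if there is an edge $(\varphi(p),w,\varphi(q))$ in $\A_c(L)$. Together with Conditions~1 and~2, the bijectivity of $\varphi$, and determinism, this exhibits $\varphi$ as an isomorphism of deterministic compact automata.

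The main obstacle I anticipate is interpretive rather than technical: the phrase \emph{compact automaton} in the statement must be read as \emph{trim deterministic compact automaton}, since otherwise uniqueness fails trivially (one can always adjoin inaccessible or nondeterministic branches). Once that convention is fixed, the proof is essentially bookkeeping with the three reduction axioms, leveraging the structural fact that every state of $\A_c(L)$ is special.
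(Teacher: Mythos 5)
Your argument is correct and follows essentially the paper's own route: the corollary is obtained there from Propositions~\ref{same} and~\ref{reduction} together with the observation that all states of $\A_c(L)$ are special, and your explicit uniqueness step (equal cardinalities force the reduction to be a bijection, hence an isomorphism because special paths are single edges when every state is special) is exactly what the paper leaves implicit. Your interpretive caveat that the statement is to be read among trim deterministic compact automata also matches the paper's intended reading.
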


Let $\A=(Q,i,T)$ be a trim compact deterministic automaton.
Let $q\in Q$ be a non-special state. Let $q\edge{v}r$
be the unique edge going out of $q$. Then $q\neq r$: indeed, if $q=r$, then $q$ is not co-accessible, since $q$ is not terminal (being not special), and there is no other outgoing edge than the loop $q\edge{v}q$; this contradicts that $\A$ is trim.
Since $i$ is special, we have also $q\neq i$.
Consider the compact automaton $\A'=(Q\setminus\{q\},i,T)$
with set of edges
\begin{enumerate}
\item[(i)] the edges $(p,w,r)$ of $\A$ with $p,r\ne q$,
\item[(ii)] the edges $(p,uv,r)$ for every edge $(p,u,q)$
  of $\A$ 
  \end{enumerate}
The identity map of $Q_s$ is a reduction
from $\A$ onto $\A'$, called an \emph{elementary reduction}.

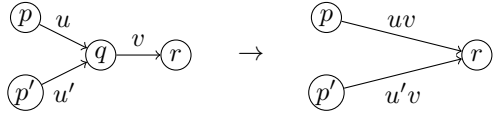
\begin{figure}
\centering
      \tikzset{node/.style={circle,draw,minimum size=0.4cm,inner sep=0.4pt}}
  \tikzset{box/.style={draw,minimum size=0.4cm,inner sep=0pt}}
  \tikzstyle{loop right}=[in=-40,out=40,loop]
  \begin{tikzpicture}
    \node[node](p)at(0,.5){$p$};\node[node](p')at(0,-.5){$p'$};
    \node[node](q)at(1,0){$q$};
    \node[node](r)at(2,0){$r$};
    \node at(3,0){$\rightarrow$};
    \node[node](p2)at(4,.5){$p$};
    \node[node](p'2)at(4,-.5){$p'$};
    \node[node](r2)at(6,0){$r$};

    \draw[above,->](p)edge node{$u$}(q);
    \draw[below,->](p')edge node{$u'$}(q);
    \draw[above,->](q)edge node{$v$}(r);
    \draw[above,->](p2)edge node{$uv$}(r2);
    \draw[below,->](p'2)edge node{$u'v$}(r2);
    \end{tikzpicture}
    \caption{An elementary reduction.}\label{figureElementary}
  \end{figure}

\begin{proposition}\label{elementary}
The minimal compact automaton $\A_c(L)$ is obtained from
$\A(L)$ by a sequence of elementary reductions.
\end{proposition}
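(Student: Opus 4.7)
The plan is to iterate elementary reductions starting from $\A(L)$, viewed as a trim deterministic compact automaton (every edge carrying a single-letter label), and to show that the process terminates at $\A_c(L)$. As long as some state $q$ is non-special, we apply an elementary reduction to remove it; since each reduction strictly decreases the (finite) number of states, the procedure terminates in finitely many steps, yielding a compact automaton $\A^{*}$ in which every state is special.

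The key properties I would verify to be preserved under every elementary reduction are: (i) the recognized language, which follows immediately from Proposition~\ref{same}; (ii) trim determinism, because a replacement edge $(p,uv,r)$ begins with the same letter as the edge $(p,u,q)$ it replaces, so outgoing edges from $p$ retain distinct first letters, and because any successful path through the removed state $q$ translates directly into a successful path avoiding $q$; and (iii) the set of special states, since $q$ itself is non-special, and the specialness of any surviving state depends only on its initial/terminal status and on the first letters of its outgoing edges, neither of which is affected by the reduction. Property (iii) is the main subtlety: one must carefully check that collapsing $p\edge{u}q\edge{v}r$ into $p\edge{uv}r$ neither demotes a previously special state nor promotes a previously non-special state.

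Now, under the canonical bijection between the states of $\A(L)$ and the nonempty residuals of $L$, a state of $\A(L)$ is special exactly when the corresponding residual is a special residual in the sense defined for $\A_c(L)$. Therefore the state set of $\A^{*}$ — obtained from that of $\A(L)$ by deleting precisely the non-special states — is in bijection with the set of special residuals, i.e.\ with the state set of $\A_c(L)$. In particular $\A^{*}$ and $\A_c(L)$ have the same number of states.

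Finally, since $\A^{*}$ recognizes $L$ and has the same number of states as $\A_c(L)$, which by Corollary~\ref{corollaryMinimalCompact} is the minimum possible for any compact automaton recognizing $L$, the uniqueness part of that corollary forces $\A^{*}=\A_c(L)$. Thus the sequence of elementary reductions produced by the loop above transforms $\A(L)$ into $\A_c(L)$, as required.
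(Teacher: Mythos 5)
Your proof is correct, and it reaches the conclusion by a different final step than the paper, although the overall skeleton (iterate elementary reductions, removing one non-special state at a time, until only special states remain) is the same. The paper's proof tracks a different invariant: property (R), namely injectivity of the map sending each state, reachable by a path labelled $u$, to the residual $u^{-1}L$. Since $\A(L)$ has (R) and each elementary reduction preserves it (the surviving states are reached by the same labels), the terminal automaton is identified \emph{directly}, state by state, with $\A_c(L)$, without appealing to Corollary~\ref{corollaryMinimalCompact}. You instead track the set of special states (rightly flagged as the main point: initial/terminal status and the first letters of outgoing edges are untouched by a reduction), observe that the special states of $\A(L)$ correspond exactly to the special residuals, and then close by a counting argument plus the minimality-and-uniqueness statement of Corollary~\ref{corollaryMinimalCompact}, which identifies your terminal automaton $\A^{*}$ with $\A_c(L)$ up to isomorphism. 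What each approach buys: yours is lighter bookkeeping per step (only the language, via Proposition~\ref{same}, trim determinism, and specialness need checking) and makes explicit the pleasant structural fact that elementary reductions never create or destroy special states; the paper's (R)-invariant avoids any reliance on the uniqueness clause of Corollary~\ref{corollaryMinimalCompact} (whose proof is itself somewhat terse) and yields a concrete identification of the states of the final automaton with the special residuals rather than an identification up to isomorphism. Both routes are legitimate within the paper's logical order, since Corollary~\ref{corollaryMinimalCompact} precedes Proposition~\ref{elementary} and does not depend on it.
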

\begin{proof}
Consider the deterministic compact automata recognizing $L$, having the following property, denoted by (R): for each state $q$, reachable from the initial state by a path labelled $u$, define the (well defined) mapping $\varphi_\A$ from the set of states into the set of residuals of $L$ by $q\mapsto u^{-1}L$; then this mapping is injective.

The minimal automaton $\A(L)$ has property $(R)$. We claim that property (R) is preserved by each elementary reduction. If an 
automaton has property (R) and has only special states, then it must be the minimal compact automaton. This proves the 
proposition.

We prove the claim. Let $\A$ and $\A'$ be as above. Then, as is easily verified, one has $\varphi_{\A'}=\varphi_\A|(Q\setminus \{q\})$, and more precisely, if $p\neq q$ is reachable by $u$ from $i$ in $\A$, then it is reachable from $i$ by $u$ in $\A'$ and therefore $\varphi_\A(p)=u^{-1}L=\varphi_{\A'}(p)$.
%
  \end{proof}
\begin{example}
  Let $\A$ be the deterministic
  automaton represented in Figure~\ref{figureS(abc)}.

The special states are $0,1,3,7$. There is a reduction from
this automaton to the compact automaton of Figure~\ref{figureCompact2}.

Two elementary reductions,
 suppressing $5,6$
give the automaton of Figure~\ref{figureCompact3}.
\begin{figure}[hbt]
    \centering
      \tikzset{node/.style={circle,draw,minimum size=0.4cm,inner sep=0.4pt}}
  \tikzset{box/.style={draw,minimum size=0.4cm,inner sep=0pt}}
  \tikzstyle{loop right}=[in=-40,out=40,loop]
  \begin{tikzpicture}
    \node[node,double](1)at(0,0){$0$};\draw[->](-.5,0)--node{}(1);
    \node[node,double](a)at(1.5,0){$1$};
    \node[node](ab)at(3,0){$2$};
    \node[node,double](aba)at(4.5,0){$3$};
    \node[node](abac)at(6,0){$4$};
    \node[node,double](abacaba)at(7.5,0){$7$};

    \draw[->,above](1)edge node{$a$}(a);
    \draw[->,above](a)edge node{$b$}(ab);
    \draw[->,above](ab)edge node{$a$}(aba);
    \draw[->,above](aba)edge node{$c$}(abac);
    \draw[->,above](abac)edge node{$aba$}(abacaba);

    \draw[bend left=40,above,->](1)edge node{$b$}(ab);
    \draw[bend left=40,above,->](a)edge node{$c$}(abac);
    \draw[bend right=40,above,->](1)edge node{$c$}(abac);
    \end{tikzpicture}
  \caption{Suppresion of $5,6$.}\label{figureCompact3}
\end{figure}
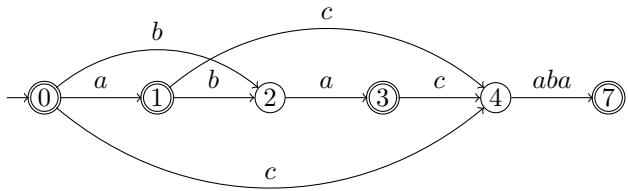  
  The suppression of $4$ gives then the compact automaton of Figure~\ref{figureCompact4}.
  \begin{figure}[hbt]
    \centering
      \tikzset{node/.style={circle,draw,minimum size=0.4cm,inner sep=0.4pt}}
  \tikzset{box/.style={draw,minimum size=0.4cm,inner sep=0pt}}
  \tikzstyle{loop right}=[in=-40,out=40,loop]
  \begin{tikzpicture}
    \node[node,double](1)at(0,0){$0$};\draw[->](-.5,0)--node{}(1);
    \node[node,double](a)at(1.5,0){$1$};
    \node[node](ab)at(3,0){$2$};
    \node[node,double](aba)at(4.5,0){$3$};
    
    \node[node,double](abacaba)at(6,0){$7$};

    \draw[->,above](1)edge node{$a$}(a);
    \draw[->,above](a)edge node{$b$}(ab);
    \draw[->,above](ab)edge node{$a$}(aba);
    \draw[->,above](aba)edge node{$caba$}(abacaba);

    \draw[bend left=40,above,->](1)edge node{$b$}(ab);
    \draw[bend left=40,above,->](a)edge node{$caba$}(abacaba);
    \draw[bend right=40,above,->](1)edge node{$caba$}(abacaba);
    \end{tikzpicture}
  \caption{Suppresion of $4$.}\label{figureCompact4}
\end{figure}
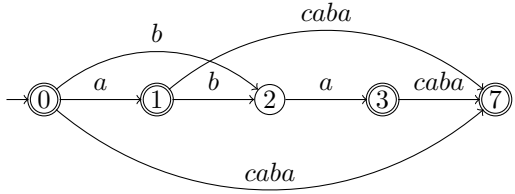 
  Finally, the suppression of $2$ gives the minimal compact automaton
  of Figure~\ref{figureCompact2}.
  \end{example}
  
 \section{Direct construction of the compact suffix automaton of $\Pal(u)$}\label{direct}
 
 In Theorem \ref{theoremPalSuf}, we have given some properties of the minimal automaton $\mathcal S(u)$ of the set of suffixes of $\Pal(u)$. By  
Proposition \ref{elementary}, we know how to transform this automaton into the minimal compact automaton of this set, which we denote $\mathcal S_c(u)$. 

In the present section, we construct directly this compact automaton.
One reason to proceed directly from $u$  to $S_c(u)$ is that
the number of states of $\mathcal S_c(u)$ is $1+$ the length of $u$ (by Lemma \ref{states} below), while
the number of states of $\mathcal S(u)$ (which is $1+$ the length of $Pal(u)$ by Theorem \ref{theoremPalSuf}) can be exponential in $|u|$ (for example, if $u=(ab)^n$,
the length of $Pal(u)$ is $F_{2n+3}-2$, which grows exponentially with $n$).

\begin{theorem}\label{Construction} The automaton $\Suf_c(u)$ is completely characterized as follows:
the states are the prefixes of $u$, all terminal, and $1$ is the initial state.
For each factorization $u=xyaz$, where $a$ is a letter and $x,y,z$ are words, with $y$ $a$-free, there is a  transition
$x \to xya$, 
labelled $\Pal(xy)^{-1}\Pal(xya)$.
\end{theorem}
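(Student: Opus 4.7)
The plan is to derive $\Suf_c(u)$ from the ordinary minimal suffix automaton $\Suf(u)$ of $\Pal(u)$ via elementary reductions (Proposition~\ref{elementary}), and then to check that the resulting compact automaton matches the description of $\mathcal{B}$ in the theorem. First I verify that the proposed labels make sense: Justin's formula~\eqref{equationJustinR} gives $\Pal(xya)=\Pal(xy)R_{xy}(a)$, so $\Pal(xy)^{-1}\Pal(xya)=R_{xy}(a)$, a nonempty word in $A^+$ starting with the letter $a$ (by a straightforward induction on $|xy|$). In particular, distinct outgoing edges from a given state $x$ carry labels with distinct first letters, so $\mathcal{B}$ is a legitimate deterministic compact automaton.

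Next I identify the special states of $\Suf(u)$. By Theorem~\ref{theoremPalSuf}, its states correspond bijectively to the prefixes of $\Pal(u)$, and the terminal ones are the palindromic prefixes. Standard facts about iterated palindromic closure imply that these palindromic prefixes are exactly $\{\Pal(x):x\text{ prefix of }u\}$, giving $|u|+1$ terminal states. For a non-palindromic prefix $p$ of $\Pal(u)$, I claim $p$ is not right-special: since $\Pal(u)$ is a palindrome, $p$ being right-special is equivalent to $\tilde p$ being left-special, which by Corollary~\ref{corollaryLS} forces $\tilde p$ to be a prefix of $\Pal(u)$; but two prefixes of equal length coincide, forcing $p=\tilde p$ and contradicting non-palindromicity. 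Hence the special states are in bijection with prefixes of $u$ via $x\leftrightarrow \Pal(x)$, the initial state $1=\Pal(1)$ corresponds to $x=1$, and all special states are terminal, matching the state description of $\mathcal{B}$.

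The heart of the proof is matching the compact edges to the special paths of $\Suf(u)$. For a factorization $u=xyaz$ with $y$ $a$-free, I plan to show that reading $R_{xy}(a)$ from state $\Pal(x)$ in $\Suf(u)$ traces a special path to state $\Pal(xya)$. Since $\Pal(xy)$ is a palindromic prefix having $\Pal(x)$ as a palindromic suffix, $\Pal(x)R_{xy}(a)$ is a suffix of $\Pal(xya)=\Pal(xy)R_{xy}(a)$ and hence a factor of $\Pal(u)$, giving existence of the path. The residual equivalence $\Pal(x)R_{xy}(a)\sim\Pal(xya)$ in $\Suf(u)$, which pins the endpoint at $\Pal(xya)$, amounts to showing that every occurrence of $\Pal(x)R_{xy}(a)$ in $\Pal(u)$ is preceded by the word $s$ defined by $\Pal(xy)=s\Pal(x)$; I would obtain this by iterated application of Corollary~\ref{corollaryLS}, since any left-branching during the extension would exhibit a left-special factor that fails to be a prefix of $\Pal(u)$. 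The intermediate prefixes on this path lie strictly between $\Pal(xy)$ and $\Pal(xya)$, are non-palindromic, and so are non-special, confirming the path is genuinely special.

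The main obstacle is the converse direction: that these are \emph{all} special paths from state $\Pal(x)$, which reduces to showing that the outgoing letters of $\Pal(x)$ in $\Suf(u)$ are exactly the distinct letters occurring in the suffix $u/x$ of $u$. I would prove this by induction on $|u|$: writing $u=vd$, any new right extension $\Pal(x)c$ of $\Pal(x)$ (as a factor of $\Pal(u)$ but not of $\Pal(v)$) must involve the appended block $R_v(d)$; using the explicit form $R_v(d)=d\cdot\tilde y$, where $y$ is the prefix of $\Pal(v)$ complementary to the longest palindromic suffix of $\Pal(v)d$, together with the fact that $\Pal(x)$ is a palindromic suffix of $\Pal(v)$ immediately followed by $d$ in $\Pal(u)$, a case analysis on the occurrence of $\Pal(x)$ in $R_v(d)$ rules out any new $c\neq d$ and shows that $c=d\in u/x$. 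With both inclusions established, the bijection between factorizations $u=xyaz$ and special paths starting at $\Pal(x)$ is complete, and Proposition~\ref{elementary} together with Proposition~\ref{reduction} yields $\mathcal{B}=\Suf_c(u)$.
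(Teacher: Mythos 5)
Your architecture (identify the special states of $\Suf(u)$ as the palindromic prefixes $\Pal(x)$, identify the special paths, then invoke Propositions~\ref{reduction} and~\ref{elementary}) is legitimate and genuinely different from the paper, which never analyses right extensions of factors at all: the paper builds $\Suf_c(ux)$ from $\Suf_c(u)$ (Proposition~\ref{construction}) and proves correctness by a counting argument (Lemmas~\ref{auto} and~\ref{keeping}), showing every recognized word is a suffix of $\Pal(ux)$ and that the numbers of recognized words agree via a telescoping count of paths. Your route, by contrast, rests on two combinatorial claims, and the one you yourself call the main obstacle is not proved. The claim that the outgoing letters of state $\Pal(x)$ in $\Suf(u)$ are exactly the letters of $x^{-1}u$ is true (it is in fact a consequence of the theorem), but the inductive step you sketch does not deliver it. Write $u=vd$ and suppose $\Pal(x)c$ occurs in $\Pal(vd)=\Pal(v)d\tilde y$ but not in $\Pal(v)$. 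If the whole occurrence sits inside $\tilde y$, it is a factor of $\Pal(v)$ (since $\tilde y$ is a suffix of $\Pal(v)$), a contradiction; otherwise either $c$ is the junction letter $d$, or the occurrence of $\Pal(x)$ itself covers the letter $d$ at position $|\Pal(v)|+1$ and $c$ lies further inside $\tilde y$. When $d$ does not occur in $v$ this last case is impossible because $\Pal(x)$ is $d$-free; but when $d$ occurs in $v$ (so $\Pal(vd)=\Pal(v)\Pal(v_1)^{-1}\Pal(v)$ by \eqref{Justin-bis}) this straddling occurrence of $\Pal(x)$ is \emph{not} the occurrence ``as a palindromic suffix of $\Pal(v)$ immediately followed by $d$'' that you invoke -- that occurrence only shows $d$ \emph{is} an extension letter -- and excluding $c\neq d$ for the straddling occurrences is exactly the nontrivial content of the claim. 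No argument is given for it, so this is a genuine gap.

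There is also a smaller hole in the forward direction. Corollary~\ref{corollaryLS} only says left-special factors are prefixes; to conclude that every occurrence of $s'\,\Pal(x)R_{xy}(a)$ (with $s'$ a suffix of $s$) extends on the left by the next letter of $s$, you must also know that $s'\,\Pal(x)R_{xy}(a)$ is \emph{not} a prefix of $\Pal(u)$ -- both to apply the corollary in the contrapositive and to exclude an occurrence at the very start of $\Pal(u)$, which is preceded by nothing. You assert this implicitly. It can be repaired: such a word would be a border of $\Pal(xya)$, hence a palindromic prefix, hence of the form $\Pal(x')$ with $x'$ a prefix of $xy$, while \eqref{Justin-bis} gives $|\Pal(x)R_{xy}(a)|>|\Pal(xy)|\ge|\Pal(x')|$, a contradiction; but some such argument must be written, and the analogous statement for every proper prefix $r$ of the label $R_{xy}(a)$ is also what your claim that the intermediate states are the non-palindromic prefixes strictly between $\Pal(xy)$ and $\Pal(xya)$ (hence non-special) depends on. With these two points supplied the proof would go through; alternatively, the paper's counting argument bypasses both difficulties, which is precisely why it is organized around Proposition~\ref{construction} rather than around the factor structure of $\Pal(u)$.
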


Recall from Section 
\ref{sectionCompactAutomata} that the states of the automaton $\Suf_c(u)$ are the special residuals of $L$, the set of suffixes of $\Pal(u)$. By Theorem \ref{theoremPalSuf}, the nonempty residuals 
of $L$ are the $p^{-1}L$ where $p$ is a prefix of $\Pal(u)$. Clearly, the mapping $p\mapsto p^{-1}L$ is a bijection from 
the set of prefixes of $\Pal(u)$ onto the set of nonempty residuals of $L$. 
 
\begin{lemma}\label{states} The set of states $\Suf_c(u)$ is naturally in bijection with the set of palindromic prefixes of $\Pal(u)$. This set is naturally in bijection with the set of prefixes of $u$; with this identification, the initial state is $1$ and all states are terminal.
\end{lemma}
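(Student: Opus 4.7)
The plan is to analyze the special residuals of the language $L$ of suffixes of $\Pal(u)$---by construction these are the states of $\Suf_c(u)$---and to show that they correspond first to the palindromic prefixes of $\Pal(u)$, and then via the map $v\mapsto\Pal(v)$ to the prefixes of $u$. Throughout I will use that $\Pal(u)$ is a palindrome (Proposition~\ref{propositionPalisPal}).

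By Theorem~\ref{theoremPalSuf}, the nonempty residuals $p^{-1}L$ are in bijection with the prefixes $p$ of $\Pal(u)$, so I need to determine which such $p$ yield a special residual. Conditions~1 and~2 in the definition are easy: condition~1 gives $p=1$, and condition~2 ($p\in L$) means $p$ is both a prefix and a suffix of the palindrome $\Pal(u)$, which forces $p=\tilde p$. The crux is condition~3. I would argue that the existence of two $v,w\in p^{-1}L$ beginning with different letters is equivalent to $\tilde p$ being a left-special factor of $\Pal(u)$. Indeed, since $\Pal(u)=\widetilde{\Pal(u)}$, one has $v\in p^{-1}L$ iff $\tilde v\tilde p$ is a prefix of $\Pal(u)$, so the first letter of $v$ coincides with the letter immediately preceding an occurrence of $\tilde p$ in $\Pal(u)$. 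Thus having two choices of first letter is precisely left-specialness of $\tilde p$. Corollary~\ref{corollaryLS} then forces $\tilde p$ to be a prefix of $\Pal(u)$, and since $|p|=|\tilde p|$ this gives $p=\tilde p$. Conversely, any palindromic prefix equals its own reversal and hence is also a suffix, so condition~2 holds automatically. This yields the first claimed bijection.

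To finish, I would invoke the classical fact (an easy induction from $\Pal(va)=(\Pal(v)a)^{(+)}$ together with injectivity of $\Pal$ on $A^*$) that the palindromic prefixes of $\Pal(u)$ are exactly the $|u|+1$ words $\Pal(v)$ for $v$ a prefix of $u$. Under this identification the initial state $L=1^{-1}L$ corresponds to the empty prefix $1=\Pal(1)$, and every palindromic prefix $p=\tilde p$ lies in $L$, so all states of $\Suf_c(u)$ are terminal by the definition of terminal states of a minimal compact automaton. The one delicate point is the reversal argument matching condition~3 with left-specialness of $\tilde p$; everything else then reduces to Corollary~\ref{corollaryLS} together with routine bookkeeping.
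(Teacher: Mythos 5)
Your proposal is correct and follows essentially the same route as the paper: classify the special residuals $p^{-1}L$ via the three defining conditions, handle condition~3 by passing from right-specialness of $p$ to left-specialness of $\tilde p$ and invoking Corollary~\ref{corollaryLS}, and then identify the palindromic prefixes of $\Pal(u)$ with the prefixes of $u$ via $v\mapsto\Pal(v)$. The only difference is cosmetic: the paper cites Justin for that last identification, whereas you sketch it as a classical fact provable by induction.
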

  
\begin{proof} The second bijection maps a prefix $p$ of $u$ onto the prefix $\Pal(p)$ of $\Pal(u)$ (see \cite {Justin2005} p. 209).

Let $p$ be a prefix of $\Pal(u)$. According to the definition of special residuals in Section \ref{sectionCompactAutomata}, $p^{-1}L$ is special if and only if (i) either 
$p=1$, or (ii) $p\in L$, or (iii) if there are two words in $p^{-1}L$ beginning by different letters. 

Let $p^{-1}L$ be a special residual of $L$. We show that in all three cases, $p$ is a palindromic prefix of 
$\Pal(u)$.

In case (i), $p=1$ which is clearly a palindromic prefix of $\Pal(u)$. In case (ii), $p$ is a suffix of $\Pal(u)$, and being also a 
prefix, it is a palindromic prefix of $\Pal(u)$. In case (iii), let $s,t$ be the two words, with $s=as'$, $t=bt'$, for distinct letters 
$a,b$; then $pas',pbt'$ are in $L$, hence $p$ is a right special factor of $\Pal(u)$; then $\tilde p$ is a left special factor of $\Pal(u)$, 
thus by Corollary \ref{corollaryLS}, $\tilde p$ is a prefix of $\Pal(u)$ and therefore $p$ is a palindromic prefix of $\Pal(u)$. 

Conversely, if $p$ is a palindromic prefix of $\Pal(u)$, then $p$ is also a suffix of $\Pal(u)$, hence $p\in L$ and $p^{-1}L$ is special residual of $L$, by case (ii).
\end{proof}
 
We use another formula of Justin, see \cite{Justin2005} p. 209.
Let $x \in A$. If $u$ is $x$-free then $Pal(ux) = Pal(u)xPal(u)$. If on the other hand  $x$ occurs in $u$, write $u = u_1xu_2$ with $u_2$ 
$x$-free. Then
\begin{equation}\label{Justin-bis}
\Pal(ux) = \Pal(u)\Pal(u_1)^{-1}\Pal(u).
\end{equation}
 
The recursive definition of $\mathcal S_c(u)$ is is explained in the following result.
 
\begin{proposition}\label{construction} Let $u\in A^*, x\in A$. Define $u=hu_2$, where $u_2$ is the longest $x$-free suffix of $u$. The automaton $\Suf_c(u)$ having as set of states the set of prefixes of $u$, as stated in Lemma \ref{states}, construct an automaton $\mathcal S$ as follows:
\begin{itemize}
\item add to $\Suf_c(u)$ the new state $ux$, which is terminal;
\item for each prefix $p$ of $u_2$, add an edge from the state $hp$ of $\Suf_c(u)$ to the new state $ux$, labelled $\Pal(u)^{-1}\Pal(ux)$.

Then $\mathcal S=\mathcal S_c(u)$.
\end{itemize}
\end{proposition}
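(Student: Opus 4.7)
The plan is to derive Proposition~\ref{construction} as a direct consequence of Theorem~\ref{Construction} applied to the word $ux$, by sorting the edges of $\Suf_c(ux)$ into those inherited from $\Suf_c(u)$ and those created by the appended letter $x$.

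First I would verify the state set. By Theorem~\ref{Construction} applied to $ux$, the states of $\Suf_c(ux)$ are the prefixes of $ux$, all terminal, with initial state $1$. Since the prefixes of $ux$ are precisely the prefixes of $u$ together with $ux$ itself, the state portion of the construction is automatic.

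For the edges, Theorem~\ref{Construction} parameterises them by factorisations $ux = \alpha\beta a\gamma$ with $a\in A$ and $\beta$ being $a$-free, each yielding an edge $\alpha \to \alpha\beta a$ labelled $\Pal(\alpha\beta)^{-1}\Pal(\alpha\beta a)$. I would split according to whether the distinguished letter $a$ occurs strictly inside $u$ or coincides with the appended final $x$. In the first case, one has $\gamma = \gamma' x$ and $u = \alpha\beta a\gamma'$, which is a valid factorisation of $u$ satisfying the same $a$-freeness condition; moreover the source, target and label of the edge depend only on the prefix $\alpha\beta a$ of $u$, so this sets up a bijection between such factorisations and the corresponding edges of $\Suf_c(u)$. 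In the second case, one has $a = x$, $\gamma = 1$, and $\alpha\beta = u$ with $\beta$ an $x$-free suffix of $u$. By the definition of $u_2$ as the longest $x$-free suffix of $u$, the $x$-free suffixes of $u$ are precisely the suffixes of $u_2$; writing $u_2 = p\beta$ then gives $\alpha = hp$ with $p$ ranging over prefixes of $u_2$, and the label $\Pal(\alpha\beta)^{-1}\Pal(\alpha\beta a) = \Pal(u)^{-1}\Pal(ux)$ is the same for all choices of $p$. These are exactly the new edges prescribed by the proposition.

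There is no serious obstacle; the argument is essentially a bookkeeping exercise on factorisations once Theorem~\ref{Construction} is available. The only observation requiring a moment of thought is that the new edges all share the common label $\Pal(u)^{-1}\Pal(ux)$, but this is forced by the identity $\alpha\beta = u$ valid for every new-edge factorisation, so $\Pal(\alpha\beta) = \Pal(u)$ regardless of $p$.
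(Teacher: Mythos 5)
There is a genuine gap: your argument is circular relative to the logical structure of the paper. You derive Proposition~\ref{construction} from Theorem~\ref{Construction} applied to $ux$, but Theorem~\ref{Construction} is not an independently available fact here --- in the paper it is itself proved by a straightforward induction \emph{from} Proposition~\ref{construction}, and no other proof of it is given. The proposition is precisely the inductive engine that establishes the global description of $\Suf_c(\cdot)$, so invoking that global description to prove the inductive step assumes what is to be proved. Your factorisation bookkeeping (splitting the factorisations $ux=\alpha\beta a\gamma$ according to whether the distinguished letter lies inside $u$ or is the appended $x$, and checking that the new edges all carry the common label $\Pal(u)^{-1}\Pal(ux)$) is correct as far as it goes, but it only shows that the edge set predicted by Theorem~\ref{Construction} for $ux$ matches the edge set of $\mathcal S$; it does nothing toward the real content, namely that $\mathcal S$ is the minimal compact automaton of the suffixes of $\Pal(ux)$.

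What is actually needed, and what the paper supplies, is a direct verification in two halves. One inclusion (every word recognized by $\mathcal S$ is a suffix of $\Pal(ux)$) uses Justin's formula \eqref{Justin-bis}: a word ending at the new state has the form $s\,\Pal(u)^{-1}\Pal(ux)$ with $s$ a suffix of $\Pal(u)$, and the cases $u$ $x$-free / not $x$-free are handled via $\Pal(ux)=\Pal(u)x\Pal(u)$ and $\Pal(ux)=\Pal(u)\Pal(u_1)^{-1}\Pal(u)$ respectively (Lemma~\ref{auto}). The converse inclusion is obtained by a counting argument (Lemma~\ref{keeping}): by induction one knows the number of paths of $\Suf_c(u)$ ending at each state, the relevant sum over the prefixes $hp$ telescopes to $|\Pal(u)|-|\Pal(h^-)|$ (or $1+|\Pal(u)|$ when $h=1$), and this equals $|\Pal(ux)|-|\Pal(u)|$, so $\mathcal S$ recognizes exactly as many words as there are suffixes of $\Pal(ux)$; combined with minimality and the count of states one concludes $\mathcal S=\Suf_c(ux)$. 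None of this is present in your proposal, so as written it does not prove the proposition.
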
 

Note that $\Pal(u)$ is a prefix of $\Pal(ux)$, so that $\Pal(u)^{-1}\Pal(ux)\in A^*$. Moreover, $hp$ is a prefix of $u$, hence is a state of $\Suf_c(x)$.

\begin{figure}[hbt]
  \centering
  \tikzset{node/.style={circle,draw,minimum size=0.4cm,inner sep=0.4pt}}
  \tikzset{box/.style={draw,minimum size=0.4cm,inner sep=0pt}}
  \tikzstyle{loop right}=[in=-40,out=40,loop]
  \begin{tikzpicture}
\node[node,double](1)at(0,0){$1$};\draw[->](-.5,0)--node{}(1);
    \node[node,double](a)at(2,0){$a$};
    \node[node,double](aba)at(4,0){$ab$};
    \node[node,double](abacaba)at(6,0){$abc$};
    \node[node,double](abacabaa)at(8,0){$abca$};

    \draw[->,above](1)edge node{}(a);
    \draw[->,above](a)edge node{}(aba);
    \draw[->,above](aba)edge node{$$}(abacaba);
    \draw[->,above](abacaba)edge node{$abacaba$}(abacabaa);
    
    \draw[bend left=40,above,->](a)edge node{$abacaba$}(abacabaa);
    \draw[bend right=40,below,->](aba)edge node{$abacaba$}(abacabaa);
    \end{tikzpicture}
\caption{From $\mathcal S_c(abc)$ to $\mathcal S_c(abca)$}\label{figureCompact3}
\end{figure}
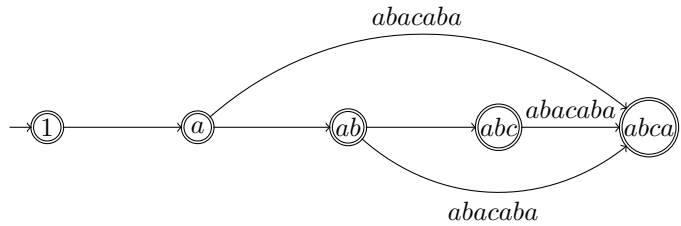

Figure \ref{figureCompact3} illustrates the construction in Proposition \ref{construction}: the construction from $\mathcal S_c(abc)$ in Figure \ref{figureCompact2} to $\mathcal S_c(abca)$; here, $u=abc$, $h=a, u_2=bc$ and only the new edges are drawn. Note that $\Pal(abc)^{-1}\Pal(abca)=abacaba$.

\begin{lemma}\label{auto} 
Each word 
recognized by the automaton $\mathcal S$ of Proposition \ref{construction} is a 
suffix of $Pal(ux)$.
\end{lemma}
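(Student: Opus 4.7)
The plan is to analyze the two possible shapes of a successful path in $\mathcal S$, using two preliminary observations.

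The first follows from Justin's Formula in its dual form \eqref{equationJustinL} applied to $u$ and $v=x$: it gives $\Pal(ux)=L_u(x)\Pal(u)$, so $\Pal(u)$ is a suffix of $\Pal(ux)$. In particular $\Pal(u)^{-1}\Pal(ux)$ is a genuine word of $A^*$, and every suffix of $\Pal(u)$ is automatically a suffix of $\Pal(ux)$. The second observation is that every state of $\Suf_c(u)$ is terminal by Lemma \ref{states}; consequently, every path in $\Suf_c(u)$ starting at the initial state $1$ is a successful path, and so its label is a suffix of $\Pal(u)$.

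Now consider a successful path in $\mathcal S$. By the construction in Proposition \ref{construction}, the only new state is $ux$; it has incoming edges but no outgoing edges, and no new transitions are added between the old states. Hence there are only two cases. If the path uses no new edge, it lies entirely in $\Suf_c(u)$; its label is then a suffix of $\Pal(u)$ by the second observation, hence a suffix of $\Pal(ux)$ by the first. Otherwise the path ends with a new edge $hp\edge{\Pal(u)^{-1}\Pal(ux)} ux$, and its initial segment is a path in $\Suf_c(u)$ from $1$ to $hp$ with some label $w$. By the second observation, $w$ is a suffix of $\Pal(u)$, so write $\Pal(u)=Pw$. The label of the whole path is $w\cdot\Pal(u)^{-1}\Pal(ux)$, which is precisely the suffix of
$$
\Pal(ux)=\Pal(u)\cdot\Pal(u)^{-1}\Pal(ux)=Pw\cdot\Pal(u)^{-1}\Pal(ux)
$$
obtained by dropping the prefix $P$, and is therefore a suffix of $\Pal(ux)$.

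No real obstacle is anticipated: the two ingredients above do all the work. The only point requiring a moment of care is that, since $ux$ has no outgoing edges in $\mathcal S$, a new edge can only occur as the last edge of the path; this is what allows us to apply the second observation to the initial segment ending at $hp$.
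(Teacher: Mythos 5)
Your proof is correct, and its skeleton is the same as the paper's: split according to whether the successful path ends at the new state $ux$ or stays inside $\Suf_c(u)$, use that all states of $\Suf_c(u)$ are terminal so that the label of any initial path is a suffix of $\Pal(u)$, and handle the first case via the fact that $\Pal(u)$ is a suffix of $\Pal(ux)$. Where you genuinely diverge is in the second case: the paper invokes Justin's formula \eqref{Justin-bis} and distinguishes whether $u$ is $x$-free or not, computing $w=s\Pal(u)^{-1}\Pal(ux)$ explicitly in each subcase, whereas you observe once and for all that, since $\Pal(u)^{-1}\Pal(ux)\in A^*$, the word $s\Pal(u)^{-1}\Pal(ux)$ is a suffix of $\Pal(u)\cdot\Pal(u)^{-1}\Pal(ux)=\Pal(ux)$ whenever $s$ is a suffix of $\Pal(u)$. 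This cancellation argument is a real simplification: it removes the case distinction and the appeal to \eqref{Justin-bis} entirely, at the price of nothing, since the prefix property it rests on is already recorded in the paper (immediately after Proposition \ref{construction}). Your derivation of the suffix property from \eqref{equationJustinL} (giving $\Pal(ux)=L_u(x)\Pal(u)$ with $L_u(x)\in A^+$) is also fine and replaces the paper's ``prefix plus both are palindromes'' argument.

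One small logical slip to fix: the statement ``$\Pal(u)^{-1}\Pal(ux)\in A^*$'' does not follow, as your ``in particular'' suggests, from $\Pal(u)$ being a \emph{suffix} of $\Pal(ux)$; it follows from $\Pal(u)$ being a \emph{prefix} of $\Pal(ux)$ (e.g.\ from \eqref{equationJustinR} with $v=x$, or from the monotonicity of $\Pal$ on prefixes stated in Section 2, and noted explicitly after Proposition \ref{construction}). Cite that instead and the argument is complete.
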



\begin{proof} 
Let $w$ be a word recognized by $\mathcal S$, that is the label of some path in $\mathcal S$. If this path does not end at $ux$, then by the construction, it is a path in $\Suf_c(u)$; 
hence $w$ is a suffix of $\Pal(u)$, and since $\Pal(u)$ is a suffix of $\Pal(ux)$ (the former is a prefix of the latter, and both words are palindromes), $w$ is a suffix of $\Pal(ux)$. If this path ends at $ux$, 
then its last edge is one of the new edges; hence $w=s\Pal(u)^{-1}\Pal(ux)$, where $s$ is a suffix 
of $\Pal(u)$. Suppose first that $u$ is $x$-free; then by Justin's result recalled above, $\Pal(ux)=\Pal(u)x\Pal(u)$ and $w=sx\Pal(u)
$ is a suffix of $\Pal(u)x\Pal(u)=Pal(ux)$. Suppose now that $u$ is not $x$-free; then $u=u_1xu_2$, $u_2$ is $x$-free and $
\Pal(ux)=\Pal(u)\Pal(u_1)^{-1}\Pal(u)$; moreover, $w=s\Pal(u)^{-1}\Pal(u)\Pal(u_1)^{-1}\Pal(u)=s\Pal(u_1)^{-1}\Pal(u)$; since $s$ is 
a suffix of $\Pal(u)$ and since $\Pal(u_1)^{-1}\Pal(u)$ is in $A^*$, we see that $w$ is a suffix of $\Pal(u)
\Pal(u_1)^{-1}\Pal(u)=\Pal(ux)$. 

\end{proof}

\begin{lemma}\label{keeping} For every prefix of $p$ of $u$, $
\Suf_c(p)$ is obtained from $\Suf_c(u)$ by keeping in the latter only the states which are prefixes of $p$.

The 
number of paths in $\Suf_c(u)$ from the initial state to the state $u$ is equal to $|\Pal(u)|-|\Pal(u^-)|$ if $u$ is nonempty (where $x^-$ 
denotes the word $x$ with the last letter removed), and it is $1$ if $u$ is empty.
\end{lemma}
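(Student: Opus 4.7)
The plan is to prove the two statements in order, using Proposition \ref{construction} as the main tool throughout.

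For the first statement, I would proceed by induction on $|u|-|p|$. The case $p=u$ is immediate. For the inductive step, write $u=u'y$ where $y$ is the last letter of $u$, so that $p$ is a prefix of $u'$. By Proposition \ref{construction}, $\Suf_c(u)$ is obtained from $\Suf_c(u')$ by adjoining the single new state $u$ together with several new edges, all of which end at $u$. Since $u$ is not a prefix of $p$, restricting to prefixes of $p$ removes this new state and all new edges, leaving exactly the restriction of $\Suf_c(u')$ to prefixes of $p$. The inductive hypothesis then identifies this restriction with $\Suf_c(p)$.

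For the second statement, I would first observe that Proposition \ref{construction} implies by induction that every edge of $\Suf_c(u)$ goes from a prefix of $u$ to a strictly longer prefix of $u$. Consequently, any path from the initial state $1$ to a prefix $p$ of $u$ uses only prefixes of $p$ as intermediate states; combined with Part 1, this identifies such paths with paths from $1$ to $p$ inside $\Suf_c(p)$. Writing $N(p)$ for this common number, with $N(1)=1$, it suffices to prove $N(u)=|\Pal(u)|-|\Pal(u^-)|$ for nonempty $u$ by induction on $|u|$.

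For the inductive step, set $u=u^-x$ and $u^-=hu_2$ with $u_2$ the longest $x$-free suffix of $u^-$. By Proposition \ref{construction} the edges into $u$ come precisely from states $hp$ with $p$ a prefix of $u_2$, giving
\begin{equation*}
N(u)=\sum_{p\text{ prefix of }u_2} N(hp).
\end{equation*}
Writing $u_2=a_1\cdots a_k$ and applying the inductive hypothesis, the contributions $N(ha_1\cdots a_i)$ for $i\geq 1$ telescope to $|\Pal(u^-)|-|\Pal(h)|$. Adding the boundary term $N(h)$, evaluated via the inductive hypothesis using $h^-=u_1$ when $u^-=u_1xu_2$ is not $x$-free, or via $N(1)=1$ when $h=1$ (the $x$-free case), yields $|\Pal(u^-)|-|\Pal(u_1)|$ or $|\Pal(u^-)|+1$ respectively. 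Justin's formula \eqref{Justin-bis} shows $|\Pal(u)|-|\Pal(u^-)|=|\Pal(u^-)|-|\Pal(u_1)|$ in the non-$x$-free case, and the identity $\Pal(u)=\Pal(u^-)x\Pal(u^-)$ gives $|\Pal(u^-)|+1$ in the $x$-free case, matching $N(u)$ in both situations. The main subtlety I expect is correctly handling the boundary term $N(h)$ and uniformly treating the two cases arising from whether $u^-$ is $x$-free.
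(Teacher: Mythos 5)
Your argument is circular relative to the paper's logical structure: you use Proposition~\ref{construction} as an established tool, but in the paper that proposition has no independent proof --- its proof reads ``It follows from the proof of Lemma~\ref{keeping}.'' The two statements are established by a joint induction on $|u|$. Lemma~\ref{auto} only gives one inclusion (every word recognized by the recursively built automaton $\mathcal S$ is a suffix of $\Pal(ux)$); the converse, and hence the identification $\mathcal S=\Suf_c(ux)$ --- which is exactly the description of the in-edges of the new state that your induction relies on --- is obtained by a counting argument that needs the path-count assertion of the lemma as inductive hypothesis: one counts the words recognized by $\mathcal S$, shows this number equals $|\Pal(ux)|+1$, concludes that $\mathcal S$ recognizes precisely the suffixes of $\Pal(ux)$, and then, since $\mathcal S$ has $|ux|+1$ states (as many as the minimal compact automaton, by Lemma~\ref{states}) and a unique longest path, that it equals $\Suf_c(ux)$. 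By taking Proposition~\ref{construction} for granted you have assumed the step that carries all the difficulty, so as written the proof of the second assertion justifies the in-edge structure at the state $u$ by a proposition whose only available proof is the lemma being proved.

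On the positive side, your computation is essentially the paper's: the decomposition $u^-=hu_2$, the telescoping sum over the prefixes of $u_2$, the boundary term $N(h)$ (with $h^-=u_1$ in the non-$x$-free case and $N(1)=1$ otherwise), and the two cases settled by Justin's formula~\eqref{Justin-bis} all occur in the paper's argument, where the same sum is evaluated to $|\Pal(u^-)|-|\Pal(h^-)|$ resp.\ $1+|\Pal(u^-)|$. To repair your proof, fold this computation into a simultaneous induction that proves Lemma~\ref{keeping} and Proposition~\ref{construction} together: at each step, use Lemma~\ref{auto} plus your count to show the constructed automaton recognizes all $|\Pal(ux)|+1$ suffixes of $\Pal(ux)$ and hence coincides with $\Suf_c(ux)$; only then do the restriction property and the path-count formula follow for $ux$. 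In other words, the count you computed must be used in the opposite direction from the way you use it.
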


\begin{proof}
We prove the lemma by induction on the length of $u$.
For $u=1$, it is immediate.

Suppose now that $u\in A^*$ and $x\in A$. We prove the assertions for $ux$, admitting them for shorter words.

Take the notation $\mathcal S$ and $u=hu_2$ in Proposition \ref{construction}. We know by Lemma \ref{auto} that each word recognized by $\mathcal S$ is recognized by $\Suf_c(ux)$. We prove now the converse, by a
counting argument, using the induction hypothesis. Let $n_w$ denote the number of words recognized by $\Suf_c(w)$. Then $n_w$ is equal 
to the number of suffixes of $\Pal(w)$, hence is equal to
to $1+$ the length of $\Pal(w)
$. Hence $n_{ux}-n_u=|\Pal(ux)|-|\Pal(u)|$. Now let $n$ be the number of words recognized by $\mathcal S$. By construction of 
the automaton $\mathcal S$, each word recognized by it, and not recognized by $\Suf_c(u)$, is of the form 
$w=s\Pal(u)^{-1}\Pal(ux)$, where $s$ is the label of some path in $\Suf_c(u)$ which starts at $1$ and ends at $hp$ for some prefix 
$p$ of $u_2$. By induction, the number of such words $s$ is equal to $|\Pal(hp)|-|\Pal((hp)^-)|$ if $hp$ is nonempty, and 1 if $hp$ is empty. 
Since the corresponding sum is telescoping, it follows that the number $n-n_u$ of possible words $s$ is equal to $|\Pal(u)|-|\Pal(h^-)|$ if $h$ is nonempty, and to 
$1+|\Pal(u)|$ if $h$ is empty. If $u$ is not $x$-free, then $h=u_1x$ is nonempty, $h^-=u_1$ and $\Pal(ux)=\Pal(u)
\Pal(u_1)^{-1}\Pal(u)$, so that $n_{ux}-n_u=|\Pal(ux)|-|\Pal(u)|=|\Pal(u_1)^{-1}\Pal(u)|=|\Pal(u)|-|u_1|=|\Pal(u)|-|h^-|=n-n_u$. If $u$ is $x$-free, then 
$h=1$, $\Pal(ux)=\Pal(u)x\Pal(u)$, and $n_{ux}-n_u=|\Pal(ux)|-|\Pal(u)|=|x\Pal(u)|=1+|\Pal(u)|=n-n_u$. Thus in both cases, $n_{ux}=n$, which implies that $\mathcal S_c(ux)$ and $\mathcal S$ both recognize the language of suffixes of $\Pal(ux)$; since both automata have the same number of states and the first is minimal, they are isomorphic; but since both have a unique longest path of the same length, they are equal.

The two assertions of the lemma now clearly follow for $ux$.
\end{proof}

\begin{proofof}{of Proposition~\ref{construction}}
It follows from the proof of Lemma \ref{keeping}.
\end{proofof}

\begin{proofof}{Theorem \ref{Construction}}
The theorem follows by a straightforward induction from Proposition \ref{construction}.
\end{proofof}

Theorem \ref{Construction} has the following corollary, which could also be proved using (iv) in Section \ref{sectionSuffixAutomaton} and Proposition \ref{elementary}.

\begin{corollary}\label{only} In the graph $\mathcal S_c(u)$, the label of an edge depends only on the final state of the edge.
\end{corollary}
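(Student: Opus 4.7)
The strategy is to read the edge label directly off the formula provided by Theorem~\ref{Construction} and observe that, once the target of the edge is fixed, this formula no longer depends on any other choice.

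First, I would take an arbitrary state $q$ of $\Suf_c(u)$ that is the endpoint of some edge, and consider any edge with target $q$. By Theorem~\ref{Construction}, such an edge arises from a factorization $u=xyaz$ with $a$ a letter and $y$ an $a$-free word, with $q=xya$ and label $\Pal(xy)^{-1}\Pal(xya)$.

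Next, I would make the key (trivial) observation: the last letter of $q$ is exactly $a$, so $a$ is determined by $q$; consequently the prefix $xy$ of $u$ is also determined by $q$, namely $xy=q^-$, where $q^-$ denotes $q$ with its last letter removed. Hence the label rewrites as $\Pal(q^-)^{-1}\Pal(q)$, an expression depending only on $q$. Two different factorizations $u=xyaz$ and $u=x'y'a'z'$ with $xya=q=x'y'a'$ give $a=a'$ and $xy=x'y'=q^-$, although $x$ and $x'$ (and hence $y$ and $y'$) may differ; the edge labels they produce nevertheless coincide. This proves the corollary.

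I do not anticipate any obstacle: once Theorem~\ref{Construction} is available, the statement is essentially a bookkeeping remark. As a sanity check one can verify it on the example of $\Suf_c(abc)$ in Figure~\ref{figureCompact2}: the three edges entering $abacaba$ correspond to $q=abc$, and indeed $\Pal(ab)^{-1}\Pal(abc)=(aba)^{-1}(abacaba)=caba$, matching all three labels.
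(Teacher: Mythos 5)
Your proof is correct and is essentially the paper's own argument: the paper likewise observes that by Theorem~\ref{Construction} every edge into a state $w$ has label $\Pal(w^-)^{-1}\Pal(w)$, which depends only on $w$. Your additional remarks (that $a$ and $xy$ are forced to be the last letter of $q$ and $q^-$ respectively, even when $x,y$ vary) just spell out the same one-line observation in more detail.
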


\begin{proof}
Indeed, the label of each transition $v\to w$ is $\Pal(w^-)^{-1}\Pal(w)$.
\end{proof}

\begin{corollary}\label{count} Let $u\in A^*$, of length $n$, and for any $a\in A$, denote by $p_a(u)$ the position of the rightmost occurrence of 
$a$ in the word $u$, with $p_a(u)=0$ when $u$ is $a$-free. Then the number of states in $\mathcal S_c(u)$ is $n+1$ and the 
number of transitions is $\sum_{a\in A}p_a(u)$. 
\end{corollary}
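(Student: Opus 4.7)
The plan is to deduce both assertions directly from Theorem~\ref{Construction}, which provides an explicit description of states and transitions of $\Suf_c(u)$.

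For the number of states, Theorem~\ref{Construction} tells us that the states of $\Suf_c(u)$ are exactly the prefixes of $u$. Since $u$ has length $n$, it has $n+1$ prefixes (including the empty word and $u$ itself), so $\Suf_c(u)$ has $n+1$ states.

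For the number of transitions, I would count the factorizations $u=xyaz$ (with $a$ a letter and $y$ being $a$-free) given by Theorem~\ref{Construction}, grouped by the letter $a$ and by the position of the distinguished occurrence of $a$. Fix $a\in A$ and suppose that $a$ occurs in $u$ at positions $i_1<i_2<\cdots<i_k$, with $i_k=p_a(u)$; set also $i_0=0$. A factorization $u=xyaz$ in which the distinguished letter $a$ sits at position $i_j$ corresponds to a choice of $|x|\in\{0,1,\dots,i_j-1\}$ subject to the $a$-freeness of $y=u_{|x|+1}\cdots u_{i_j-1}$. This $a$-freeness is equivalent to the requirement $|x|\ge i_{j-1}$, so the number of admissible choices is exactly $i_j-i_{j-1}$.

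Summing over $j=1,\dots,k$, the series telescopes and gives $i_k=p_a(u)$ transitions associated with the letter $a$ (and this agrees with $p_a(u)=0$ when $u$ is $a$-free, where the sum is empty). Summing over all $a\in A$ yields $\sum_{a\in A}p_a(u)$, as claimed. I do not expect a substantial obstacle beyond keeping the indexing consistent; the one subtle point to verify is that the endpoint cases $|x|=i_{j-1}$ and $|x|=i_j-1$ are legitimate (the latter corresponds to $y=1$, which is trivially $a$-free), so that the count $i_j-i_{j-1}$ is exact.
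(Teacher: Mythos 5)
Your proposal is correct and follows essentially the same route as the paper: both deduce the state count from Theorem~\ref{Construction} and then count, letter by letter, the factorizations $u=xyaz$ with $y$ $a$-free. The only (minor) difference is in how $t_a=p_a(u)$ is obtained: you sum $i_j-i_{j-1}$ over the occurrences of $a$ and telescope, whereas the paper observes that such factorizations correspond bijectively to factorizations $u=xv$ with $v\in A^*aA^*$ (the distinguished $a$ being the first occurrence of $a$ in $v$), so that $x$ ranges over the prefixes of $u_1$ in $u=u_1au_2$ with $u_2$ the longest $a$-free suffix, giving $|u_1|+1=p_a(u)$ directly.
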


\begin{proof} By Theorem \ref{Construction}, the number of states is the number of prefixes of $u$, thus it is $n+1$. The number 
$t$ of transitions is equal to $t=\sum_{a\in A} t_a$, where
$t_a$ is the number of factorizations $u=xyaz$, $x,y,z\in A^*$, $y$ $a$-free. We have $t_a=0$ if $u$ is $a$-free.

Suppose that $u$ contains $a$. Denote $I_a=A^*aA^*$, the set of words containing letter $a$; clearly, each word $v$ in $I_a$ has 
a unique factorization $v=yaz$, $y,z\in A^*$, $y$ $a$-free. Hence $t_a$ is equal to the number of factorizations $u=xv$, $$ (*) \,\, x\in 
A^*,v\in I_a.$$ 
If we factorize $u=u_1au_2$, where $u_2$ is the longest $a$-free suffix of $u$, then $|u_1|+1=p_a(u)$, and a factorization $u=xv$ 
satisfies $(*)$ if and only if $x$ is a prefix of $u_1$. Hence the number of such factorizations is $|u_1|+1=p_a(u)$.
\end{proof}

For a binary alphabet, Theorem \ref{Construction} was obtained, in an another but equivalent form, by Epifanio, Mignosi, Shallit 
and Venturini 
\cite{EpifanioMignosiShallitVenturini2007} (see also \cite{BugeaudReutenauer2022}, especially Figure 1). Similarly for Corollary \ref{count}, see \cite{EpifanioMignosiShallitVenturini2007} Proposition 1.

\section{Further comments}

Following \cite{EpifanioMignosiShallitVenturini2007}, we obtain for any word $u$ on 
any alphabet, a directed graph that {\it counts from 0 to $n=|\Pal(u)|$} in the following sense: replace in $\mathcal S_c(u)$ each 
label by 
its length; then one obtains a directed graph, with the initial vertex $1$, such that for each $k=0,\ldots, n$, there is a unique path,
starting from the initial vertex, whose label is $k$ (here, labels of paths here additive). This follows clearly since there is a unique 
suffix of $\Pal(u)$ of each length $k=0,\ldots, n$. For $u$ on a binary alphabet, Epifanio et al. call this graph a {\it Sturmian 
graph}.

As open problem, we mention that Theorem \ref{construction} has certainly an interpretation as a factorization result: each suffix of 
$\Pal(u)$ as a certain factorization as a product of the words which are the label of the edges of the automaton; these words are all 
of the form $\Pal(p^-) \Pal(p)$, $p$ a proper prefix of $u$. In the binary alphabet case, this is known: the factorization is related to 
the Ostrowski lazy factorization (see \cite{EpifanioFrougnyGabrieleMignosiShallit2012}), and to the factorization theorem of Anna 
Frid \cite{Frid2018} Corollary 1, which following
\cite{BugeaudReutenauer2022} Section 9, may be stated as follows: for $u$ on a binary alphabet, of length $n$, each suffix $s$ of 
$\Pal(u)$, of length $\ell$, has a unique factorization $\prod_{1\leq i\leq n}L_0^{d_1}L_1^{d_2}\cdots L_{n-1}^{d_n}$, where 
$L_i=\Pal(p_{i}^-)^{-1}\Pal(p_{i+1})$, $p_i$ the prefix of length $i$ of $u$, and where $\ell =\sum_{1\leq i\leq n}d_iq_{i-1}$ is the 
lazy Ostrowski representation of $\ell$, with $q_j=|L_j|$.

\bibliographystyle{plain}
\bibliography{pal}

\end{document}